\theoremstyle{plain}
\newtheorem{teor}{Theorem}[section]
\newtheorem{lema}[teor]{Lemma}
\newtheorem{prop}[teor]{Proposition}
\theoremstyle{definition}
\newtheorem{defi}{Definition}[section]
\newtheorem{eje}{Example}[section]
\newtheorem{nota}[teor]{Remark}
\newtheoremstyle{teoremacita}
{3pt}
{3pt}
{\itshape}
{}
{\bfseries}
{}
{ }
{\thmname{#1}\thmnumber{ #2'}\thmnote{ #3}.}
\theoremstyle{teoremacita} \newtheorem*{teor*}{}
\newcommand{\be}{\begin{enumerate}}
\newcommand{\ee}{\end{enumerate}}
\newcommand{\bi}{\begin{itemize}}
\newcommand{\ei}{\end{itemize}}
\def\N{\mathbb N}
\def\Z{\mathbb Z}
\def\R{\mathbb R}
\def\C{\mathbb C}
\def\e{\varepsilon}
\def\d{\partial}
\begin{document}

\title[Tauberian properties for monomial summability]{Tauberian properties for monomial summability with applications to Pfaffian systems}

\author{Sergio A. Carrillo}
\address[Sergio A. Carrillo]{Dpto. \'{A}lgebra, An\'{a}lisis Matem\'{a}tico, Geometr\'{\i}a y Topolog\'{\i}a \\
Facultad de Ciencias, Universidad de Valladolid \\
Campus Miguel Delibes\\
Paseo de Bel\'{e}n, 7\\
47011 Valladolid - Spain}
\email{sergio.carrillo@agt.uva.es}

\author{Jorge Mozo-Fern\'{a}ndez}
\address[Jorge Mozo Fern\'{a}ndez]{Dpto. \'{A}lgebra, An\'{a}lisis Matem\'{a}tico, Geometr\'{\i}a y Topolog\'{\i}a \\
Facultad de Ciencias, Universidad de Valladolid \\
Campus Miguel Delibes\\
Paseo de Bel\'{e}n, 7\\
47011 Valladolid - Spain}
\email{jmozo@maf.uva.es}

\thanks{First author was supported by an FPI grant (call 2012/2013) conceded by Universidad de Valladolid, Spain.\\ Both authors partially supported by the Ministerio de Econom\'{\i}a y Competitividad from Spain, under the Project ``\'{A}lgebra y Geometr\'{\i}a en Din\'{a}mica Real y Compleja III" (Ref.: MTM2013-46337-C2-1-P)}


\subjclass[2010]{Primary 34M30, Secondary 34E05}
\date{\today}

\begin{abstract}
In this paper we will show that monomial summability processes with respect to different monomials are not compatible, except in the (trivial) case of a convergent series. We will apply this fact to the study of solutions of Pfaffian systems with normal crossings, focusing in the implications of the complete integrability condition on these systems.
\end{abstract}

\maketitle

\section{Introduction}

Monomial summability was introduced in \cite{Monomial summ} and used in order to study summability properties of solutions of a class of singularly perturbed differential systems (doubly singular differential systems), that are in some sense a generalization of the singularly perturbed differential equations studied by Canalis-Durand, Ramis, Sch\"{a}fke and Sibuya in \cite{CDRSS}. It is a notion that needs further development in order to treat other problems. For instance, monomial summability depends on a particular monomial $x_1^px_2^q$, and on the order of summability. Recall that in the classical, one variable case, it is well known that if a formal power series is both $k_1-$ and $k_2-$ summable, then it is convergent. In fact, J.-P. Ramis, in the seminal paper \cite{Ramis1} says that \emph{[...] la n\'{e}cessit\'{e} de ``m\'{e}langer'' divers processus de $k$-sommation (avec des $k$ diff\'{e}rents) est \`{a} priori \'{e}vidente pour des raisons alg\'{e}briques \'{e}l\'{e}mentaires [$\cdots$]}. The result is shown, for instance in \cite{Mar-R}, and it is the starting point of the fruitful theory of multisummability in one variable, and the key for establishing many results concerning solutions of systems of differential equations with irregular singularities. So, it turns out that a study of the relationship between monomial summability with respect to different monomials will be useful to treat new problems.

We will explore this situation in the present paper. One of the main results will be the following one:

\begin{teor*}[Theorem \ref{series non monomial sumable}]
Let $x_1^{p_j}x_2^{q_j}$ be monomials, let $k_j>0$ and let $\hat{f}_j\in E\{x_1,x_2\}^{(p_j,q_j)}_{1/k_j}$ be divergent series, for $j=1,...,n$. Then  $\hat{f}_0=\hat{f}_1+\cdots+\hat{f}_n$ is $k_0-$summable in $x_1^{p_0}x_2^{q_0}$ if and only if $p_j/p_0=q_j/q_0=k_0/k_j$ for all $j=1,...,n$.
\end{teor*}

Note that some conditions on the monomials have to be imposed in order to avoid trivial situations. The technique of proof will be induction and blowing up the series, arriving at a situation easier to work with. In some sense, this result is a first step to motivate the introduction of a theory of monomial multisummability, an issue that will be treated in a future work of the authors. 

This result is going to be applied in this paper to analyze Pfaffian systems with normal crossings, i.e., systems as
$$
\begin{cases}
x_2^q\hspace{0.1cm}x_1^{p+1}\hspace{0.075cm}\frac{\d \mathbf{y}}{\d x_1} & =f_1(x_1,x_2, \mathbf{y}),\\
x_1^{p'}x_2^{q'+1}\frac{\d \mathbf{y}}{\d x_2} & =f_2(x_1,x_2, \mathbf{y}),
\end{cases}
$$

These systems were studied by H. Majima \cite{Majima} under complete integrability condition, from the point of view of strong asymptotic expansions. G\'{e}rard and Sibuya \cite{Gerard Sibuya} studied a similar system, when $q=p'=0$. Their main result is that, under complete integrability hypothesis, and assuming the invertibility of the linear parts of the system, the only formal solution is in fact convergent (Theorem 4.1 below). This result can be stated as a theorem on summability in the sense that a series $\hat{f} (x_1,x_2)$ $k_1-$summable in $x_1$ and $k_2-$summable in $x_2$, is in fact convergent.

Complete integrability condition is a very restrictive hypothesis in our situation. In fact, we will show that for a great number of values of $(p,q,p',q')$, complete integrability condition forbids the linear parts of the system from being invertible. Moreover, we conjecture that this is always the case, i.e., you can't assume that any of the linear parts are invertible if $(p,q)\neq (p',q')$. Apparently, this fact hadn't been noticed before by the different authors that studied these systems. So, we are forced to explore summability properties assuming that a formal solution exists, but without assuming complete integrability condition.

The plan of the paper is as follows: in Section 2, the main notions of summability, both in one complex variable, and in the monomial case, are recalled. Tauberian Theorems about the relationship among summability with respect to different monomials, are shown in Section 3. Finally, Section 4 will be devoted to the application of the previous results to the study of Pfaffian systems, and to a precise analysis of the implications of assuming complete integrability condition in these systems.

\section{Summability}

We recall briefly the basic notions of asymptotic expansions, Gevrey asymptotic expansions and summability in the case of one variable and their extensions to the monomial case in two variables, as introduced in the paper \cite{Monomial summ}.

Let $(E,\|\cdot\|)$ be a complex Banach space and $\hat{f}=\sum a_n x^n\in E[[x]]$. We will denote by $\mathcal{O}(U, E)$ (resp. $\mathcal{O}_b(U, E)$) the space of holomorphic $E-$valued maps (resp. holomorphic and bounded $E-$valued maps) defined on an open set $U\subset \C^l$. If $E=\C$ we will simply write $\mathcal{O}(U)$. We also denote by $\N$ the set of natural numbers including $0$, by $\N_{>0}=\N\setminus\{0\}$ and by $D_r\subset\C$ the disk centered at the origin with radius $r$. Consider $f\in\mathcal{O}(V, E)$, where $V=V(a,b,r)=\{x\in\C | 0<|x|<r, a<\text{arg}(x)<b\}$ is an open sector in $\C$. The map $f$ is said to have $\hat{f}$ as \textit{asymptotic expansion }at the origin on $V$ (denoted by $f\sim \hat{f}$ on $V$) if for each of its proper subsectors $V'=V(a',b',r')$ ($a<a'<b'<b$, $0<r'<r$) and each $N\in\N$, there exists $C_N(V')>0$ such that

\begin{equation}\label{def asym classic}
\left\|f(x)-\sum_{n=0}^{N-1} a_n x^n\right\|\leq C_N(V')|x|^N, \hspace{0.3cm} \text{ on } V'.
\end{equation}

If we can take $C_N(V')=C(V')A(V')^N N!^s$, $C(V'), A(V')$ independent of $N$, the asymptotic expansion is said to be of \textit{$s-$Gevrey type} (denoted by $f\sim_s \hat{f}$ on $V$). In this case $\hat{f}\in E[[x]]_s$, where $E[[x]]_s$ denotes the space of $s-$Gevrey series, i.e. there exist $C,A>0$ such that $\|a_n\|\leq CA^nn!^s$, for all $n\in\N$.

\begin{nota}\label{f in x iff f in x^p} For further use, we note that $f\sim \hat{f}$ (or $f\sim_s \hat{f}$) on $V$ is equivalent to have inequalities (\ref{def asym classic}) only for $N=ML$, where $M\in\N^*$ is fixed and $L\in\N$ is arbitrary.
\end{nota}

\begin{nota}\label{equiv f-f_n}We also note that $f\sim\hat{f}$ on $V$ if and only if there is $R>0$ and a sequence $(f_N)_{N\in\N}\subset\mathcal{O}_b(D_R, E) $ satisfying that for every subsector $W$ of $V$ and $N\in\N$ there is $C_N(W)>0$ such that $$\|f(x)-f_N(x)\|\leq C_N(W)|x|^N, \hspace{0.3cm}\text{ on }W\cap D_R.$$ In the same manner, $f\sim_s\hat{f}$ on $V$ if and only if the previous condition is satisfied with $C_N(W)=C(W)A(W)^N N!^s$, for some $C(W), A(W)$ independent of $N$, and there are constants $B,D$ with $\sup_{|x|\leq R}\|f_N(x)\|\leq DB^N N!^s$, for all $N\in\N$. In any case, $\hat{f}$ is given by the limit of the Taylor series at the origin of $f_N$, in the $\mathfrak{m}-$topology of $E[[x]]$.
\end{nota}

Asymptotic expansions are unique, and respect algebraic operations and differentiation. Besides $f\sim_s 0$ on $V$ if and only if for each proper subsector $V'$ of $V$ we can find positive constants $C(V'), A(V')$ such that $$\|f(x)\|\leq C(V')\exp\left(-A(V')/|x|^{1/s}\right), \hspace{0.3cm} \text{ on } V'.$$

From this inequality can  be deduced  Watson's lemma that states that if $f\sim_s 0$ on $V(a,b,r)$ and $b-a> s\pi$ then $f\equiv 0$. This is the key point to define $k-$summability, as is stated below.

\begin{defi}Let  $\hat{f}\in E[[x]]$ be a formal power series, let $k>0$ and let $d$ be a direction.
\begin{enumerate}
\item The formal series $\hat{f}$ is called \textit{$k-$summable on $V=V(a,b,r)$} if $b-a>\pi/k$ and there exists a map $f\in\mathcal{O}(V,E)$ such that $f\sim_{1/k} \hat{f}$ on $V$. If $d$ is the bisecting direction of $V$ then $\hat{f}$ is called \textit{$k-$summable in the direction $d$} and $f$ is called \textit{the $k-$sum} of $\hat{f}$ in direction $d$ (or on $V$).

\item The formal series $\hat{f}$ is called \textit{$k-$summable}, if it is $k-$summable in every direction with finitely many exceptions mod. $2\pi$ (the singular directions).
\end{enumerate}

The algebra of $k-$summable series in the direction $d$ will be denoted by $E\{x\}_{1/k,d}$ and the algebra of $k-$summable series will be denoted by $E\{x\}_{1/k}$.
\end{defi}

The previous notions can be adapted to series in a monomial in several variables (two in our case). The main idea is to let the monomial play the role of principal variable. Let $p,q\in\N_{>0}$ be fixed and consider the monomial $x_1^px_2^q$. For formal power series we may use the filtration of $E[[x_1,x_2]]$ given by the sequence of ideals $(x_1^px_2^q)^N$, and write any $\hat{f}=\sum a_{n,m} x_1^nx_2^m\in E[[x_1,x_2]]$ uniquely as \begin{equation}\label{para definir Tpq}
\hat{f}=\sum_{n=0}^\infty f_n(x_1,x_2)(x_1^px_2^q)^n,
\end{equation}

\noindent where the series $f_n$ are given explicitly by:

\begin{equation}\label{f_n x^pe^q}
f_n(x_1,x_2)=\sum_{m<p\text{ or }j<q}a_{np+m,nq+j}x_1^mx_2^j.
\end{equation}

To ensure that each $f_n$ gives rise to an holomorphic map defined in a common polydisc at the origin it is necessary and sufficient that $\hat{f}\in \mathcal{C}=\bigcup_{r>0} \mathcal{C}_r$, where $\mathcal{C}_r=\mathcal{O}_b(D_r,E)[[x_1]]\cap \mathcal{O}_b(D_r,E)[[x_2]]$. If this is the case then $f_n\in \mathcal{E}^{(p,q)}=\bigcup_{r>0}\mathcal{E}^{(p,q)}_r$, where $\mathcal{E}^{(p,q)}_r$ is the space of holomorphic maps $g\in\mathcal{O}_b(D_r^2,E)$ with $\frac{\d^{n+m}g}{\d x_1^n\d x_2^m}(0,0)=0$ for $n\geq p$ and $m\geq q$. Each $\mathcal{E}^{(p,q)}_r$ becomes a Banach space with the supremum norm $\|g\|_r=\sup_{|x_1|,|x_2|\leq r} \|g(x_1,x_2)\|$.

Define a map $\hat{T}_{p,q}:\mathcal{C}\rightarrow \mathcal{E}^{(p,q)}[[t]]$ defined by $\hat{T}_{p,q}(\hat{f})=\sum_{n=0}^\infty f_n t^n$, using the decomposition (\ref{para definir Tpq}). We recall that $\hat{f}$ is said to be \textit{$s-$Gevrey in the monomial $x_1^px_2^q$} if for some $r>0$, $\hat{T}_{p,q}(\hat{f})\in\mathcal{E}_r^{(p,q)}[[t]]$ and it is a $s-$Gevrey series in $t$. The set of $s-$Gevrey series in the monomial $x_1^px_2^q$ will be denoted by $E[[x_1,x_2]]_s^{(p,q)}$.

We can characterize directly from the growth of the coefficients of a series, when it is of some Gevrey type in a monomial. Indeed, it follows from Cauchy's estimates that $\sum a_{n,m}x_1^nx_2^m\in E[[x_1,x_2]]_s^{(p,q)}$ if and only if there exist constants $C,A$ such that $\|a_{n,m}\|\leq CA^{n+m}\min\{n!^{s/p}, m!^{s/q}\}$, for all $n,m\in\N$. In particular we observe that $E[[x_1,x_2]]_{Ms}^{(Mp,Mq)}=E[[x_1,x_2]]_s^{(p,q)}$ for any $M\in\N_{>0}$ and $\hat{f}\in E\{x_1,x_2\}$ if and only if $\hat{T}_{p,q}(\hat{f})\in \mathcal{E}^{(p,q)}_r\{t\}$, for some $r>0$ by taking $s=0$. We can also relate the Gevrey type in a monomial in terms of another monomial as the next statement shows.

\begin{lema}\label{relating to monomials (formal)}If $\hat{f}\in E[[x_1,x_2]]_s^{(p',q')}$ then $\hat{T}_{p,q}(\hat{f})$ is a $\max\{p/p',q/q'\}s-$Gevrey series in some $\mathcal{E}^{(p,q)}_r$.
\end{lema}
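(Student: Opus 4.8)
The plan is to reduce everything to the coefficient characterization of Gevrey series in a monomial recalled just above the statement, and then carry out an elementary estimate on the Banach-space norms $\|f_n\|_r$ of the coefficients of $\hat{T}_{p,q}(\hat{f})$.

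First I would record what the hypothesis means at the level of coefficients. Writing $\hat{f}=\sum a_{n,m}x_1^nx_2^m$, the assumption $\hat{f}\in E[[x_1,x_2]]_s^{(p',q')}$ gives constants $C,A>0$ with
$$\|a_{n,m}\|\leq CA^{n+m}\min\{n!^{s/p'},m!^{s/q'}\}, \qquad n,m\in\N.$$
In particular, for $r$ small enough that $Ar<1$ the series defining $\hat{f}$ converges in each variable separately, so $\hat{f}\in\mathcal{C}_r$ and $\hat{T}_{p,q}(\hat{f})=\sum_n f_nt^n$ is well defined, with $f_n$ given by (\ref{f_n x^pe^q}); since $f_n$ only involves monomials $x_1^mx_2^j$ with $m<p$ or $j<q$, it lies in $\mathcal{E}_r^{(p,q)}$. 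The goal is then to produce constants $C',A'$ with $\|f_n\|_r\leq C'(A')^n\,n!^{\sigma}$, where $\sigma=\max\{p/p',q/q'\}s$.

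To estimate $\|f_n\|_r$ I would use the crude bound $\|f_n\|_r\leq\sum_{m<p\text{ or }j<q}\|a_{np+m,nq+j}\|\,r^{m+j}$ from the triangle inequality, and split the index set into the region $m<p$ (with $j$ unbounded) and the region $j<q$ (with $m$ unbounded); the overlap is counted twice, which only helps for an upper bound. On the first region I would use the branch $(np+m)!^{s/p'}$ of the minimum, so that the estimate is independent of the unbounded index $j$ and $\sum_j(Ar)^j$ converges; symmetrically, on the second region I would use the branch $(nq+j)!^{s/q'}$ and sum $\sum_m(Ar)^m$. The arithmetic heart is then converting these factorials into powers of $n!$: I would invoke the elementary inequalities $(a+b)!\leq 2^{a+b}a!\,b!$ and $(np)!\leq(n!)^pp^{np}$ (the multinomial bound). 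Since $m<p$ is bounded on the first region, these give $(np+m)!^{s/p'}\leq K_1B_1^{\,n}(n!)^{(p/p')s}$ for suitable $K_1,B_1$, all the exponential-in-$n$ factors $2^{nps/p'}$, $p^{nps/p'}$, $A^{n(p+q)}$ being absorbed into $B_1^{\,n}$; likewise $(nq+j)!^{s/q'}\leq K_2B_2^{\,n}(n!)^{(q/q')s}$ on the second region. Summing the two convergent geometric series over the unbounded indices yields $\|f_n\|_r\leq C'(A')^n\big(n!^{(p/p')s}+n!^{(q/q')s}\big)\leq 2C'(A')^n n!^{\sigma}$, the desired estimate.

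The only genuinely delicate point is the bookkeeping just described: region by region one must choose the branch of the minimum that keeps the estimate independent of the unbounded summation index, since the wrong branch pits factorial growth against a geometric series and diverges; one must then verify that every parasitic factor coming from the Stirling-type inequalities is truly of the form $(\text{const})^n$, so that it hides inside $A'$. Once the correct branch is fixed in each region the rest is a routine convergence check, and passing to small $r$ (so that $Ar<1$) is precisely what makes the geometric sums converge, which is why the conclusion is stated for \emph{some} $\mathcal{E}_r^{(p,q)}$.
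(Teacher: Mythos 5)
Your proof is correct and follows essentially the same route as the paper: both start from the coefficient characterization $\|a_{n,m}\|\leq CA^{n+m}\min\{n!^{s/p'},m!^{s/q'}\}$, estimate $\|f_n\|_r$ term by term from formula (\ref{f_n x^pe^q}) by splitting into the two regions $m<p$ and $j<q$ and choosing in each the branch of the minimum that depends only on the bounded index, and then convert $(np+m)!^{s/p'}$ and $(nq+j)!^{s/q'}$ into $n!^{ps/p'}$ and $n!^{qs/q'}$ times geometric factors. The only cosmetic difference is that the paper packages the last step as the limit (\ref{lim type stirling}) $\lim_{n\to\infty}(nl)!^{1/l}/l^{n}n!=0$ whereas you use the equivalent multinomial inequality $(nl)!\leq(n!)^{l}l^{nl}$ together with $(a+b)!\leq 2^{a+b}a!\,b!$.
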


\begin{proof}If $\hat{f}=\sum a_{n,m}x_1^nx_2^m$ and $\|a_{n,m}\|\leq CA^{n+m}\min\{n!^{1/p'}, m!^{1/q'}\}^s$ for all $n,m\in\N$, we can directly estimate the growth of the $f_n$ by means of formula (\ref{f_n x^pe^q}): if $|x_1|, |x_2|<r$ and $rA<1$ we get

\begin{align*}
|f_n(x_1,x_2)|\leq \frac{CA^{n(p+q)}}{1-rA}\left[\sum_{j=0}^{q-1} (nq+j)!^{s/q'}(rA)^j+\sum_{m=0}^{p-1}(np+m)!^{s/p'}(rA)^{q+m}\right].
\end{align*}

\noindent The result follows from
\begin{equation}\label{lim type stirling}\lim_{n\rightarrow\infty} (nl)!^{1/l}/l^n n!=0\end{equation} valid for any integer $l\geq2$.
\end{proof}

In the analytic setting we use \textit{sectors in the monomial $x_1^px_2^q$}, i.e. sets of the form $$\Pi_{p,q}=\Pi_{p,q}(a,b,r)=\left\{(x_1,x_2)\in \C^2 \hspace{0.1cm}|\hspace{0.1cm} 0<|x_1|^p, |x_2|^q<r,\hspace{0.1cm} a<\text{arg}(x_1^px_2^q)<b\right\}.$$

\noindent Here any convenient branch of arg may be used. The number $r$ denotes the \textit{radius}, $b-a$ the \textit{opening} and $(b+a)/2$ the \textit{bisecting direction} of the sector. 

We can construct an operator related to $\hat{T}_{p,q}$ for holomorphic maps defined on monomial sectors. To recall that construction we will focus first in the case $p=q=1$. We shall consider the maps $\pi_1, \pi_2:\C^2\rightarrow\C^2$ given by $\pi_1(x_1,x_2)=(x_1x_2,x_2)$, $\pi_2(x_1,x_2)=(x_1,x_1x_2)$, respectively, corresponding to the point blow-up of the origin in $\C^2$. Note it is sufficient to work with $\pi_1$. For a sector $\Pi_{1,1}(a,b,r)$ is clear that $$\pi_1(\Pi_{1,1}(a,b,r))=\left\{(t,x_2)\in D_{r^2}\times\C \hspace{0.1cm}|\hspace{0.1cm} |t|/r<|x_2|<r \text{, } a<\text{arg}(t)<b \right\}.$$

Any $f\in\mathcal{O}(\Pi_{1,1},E)$, $\Pi_{1,1}=\Pi_{1,1}(a,b,r)$, induces a holomorphic map on $\pi_1(\Pi_{1,1})$ given by $(t,x_2)\mapsto f(t/x_2, x_2)$. For fixed $t$ with $0<|t|<r^2$ the map $x_2\mapsto f(t/x_2,x_2)$ is holomorphic in the annulus $|t|/r<|x_2|<r$ and thus it has a convergent Laurent series expansion on $x_2$:

\begin{equation}\label{f(t/e,e)}
f\left(\frac{t}{x_2},x_2\right)=\sum_{n\in\Z} f_n(t) x_2^n,
\end{equation}

\noindent where the maps $f_n$ are holomorphic on $V=V(a,b,r^2)$. If $f$ is bounded then it induces the holomorphic map $T_{1,1}(f)_\rho:V\rightarrow \mathcal{E}_{\rho}^{(1,1)}$ for all $\rho<r$, by means of the decomposition (\ref{f(t/e,e)}):

\begin{equation}\label{Tf}
T_{1,1}(f)_\rho(t)(x_1,x_2)=\sum_{m=0}^\infty \frac{f_{-m}(t)}{t^m}x_1^m+\sum_{m=1}^\infty f_m(t)x_2^m.
\end{equation}

\noindent More generally, if $f$ satisfies $|f(x_1,x_2)|\leq K(|x_1x_2|)$ on $\Pi_{p,q}$, for some function $K:(0,r^2)\rightarrow\R$ then using Cauchy's estimates we see that:

\begin{equation}\label{inq T11}
\left\|T_{1,1}(f)_\rho(t)\right\|_\rho\leq \frac{K(|t|)}{\left(1-\frac{\rho}{r}\right)^2}, \hspace{0.3cm} \text{ on }V(a,b,\rho).
\end{equation}

\noindent Note that $f$ is determined by $T_{1,1}(f)_\rho$ since $T_{1,1}(f)_\rho(x_1x_2)(x_1,x_2)=f(x_1,x_2)$.

For the general case note that any $f\in\mathcal{O}(\Pi_{p,q},E)$, $\Pi_{p,q}=\Pi_{p,q}(a,b,r)$, can be decomposed uniquely as

\begin{equation}\label{decom f in fij}
f(x_1,x_2)=\sum_{i<p\text{ and }j<q} x_1^ix_2^j f_{ij}(x_1^p,x_2^q),
\end{equation}

\noindent where $f_{ij}\in\mathcal{O}(\Pi_{1,1},E)$ for every $i<p$, $j<q$. For $f$ bounded we can define $T_{p,q}(f)_\rho:V(a,b,\rho^2)\rightarrow \mathcal{E}^{(p,q)}_{\rho}$, $0<\rho<r$, as follows

\begin{align}\label{T_{pq}}
T_{p,q}(f)_\rho(t)(x_1,x_2)&=\sum_{i<p\text{ and }j<q} x_1^ix_2^j T_{1,1}(f_{ij})_\rho(t)(x_1^p, x_2^q).
\end{align}

To see that $T_{p,q}(f)$ is well-defined and thus holomorphic we can even show that if $f$ satisfies $|f(x_1,x_2)|\leq K(|x_1^px_2^q|)$ on $\Pi_{p,q}$, for some function $K:(0,r^2)\rightarrow\R$ then: \begin{equation}\label{inq Tpq}
\left\|T_{p,q}(f)_\rho(t)\right\|_\rho\leq \frac{r^2}{\left(1-(\frac{\rho}{r})^{\frac{1}{p}}\right)\left(1-(\frac{\rho}{r})^{\frac{1}{q}}\right)}\frac{K(|t|)}{|t|}, \hspace{0.3cm} \text{ on }V(a,b,\rho).
\end{equation}

\noindent Indeed, this inequality is deduced from inequality (\ref{inq T11}) and inequalities \begin{equation}\label{inq f_ij}
|\zeta_1\zeta_2f_{ij}(\zeta_1, \zeta_2)|\leq r^{2-i/p-j/q}K(|\zeta_1\zeta_2|), \hspace{0.3cm} \text{ on }\Pi_{1,1},  i<p, j<q.
\end{equation}

\noindent Once again, the map $f$ is determined by $T_{p,q}(f)_\rho$ since $T_{p,q}(f)_\rho(x_1^px_2^q)(x_1,x_2)=f(x_1,x_2)$.

At this point we have the adequate frame to recall the notion of asymptotic expansion in a monomial. Let $f\in\mathcal{O}(\Pi_{p,q},E)$, $\Pi_{p,q}=\Pi_{p,q}(a,b,r)$ and $\hat{f}\in\mathcal{C}$. We will say that \textit{$f$ has $\hat{f}$ as asymptotic expansion at the origin in $x_1^px_2^q$} (denoted by $f\sim^{(p,q)} \hat{f}$ on $\Pi_{p,q}$) if there is $0<r'\leq r$ such that $\hat{T}_{p,q}\hat{f}=\sum f_nt^n\in \mathcal{E}^{(p,q)}_{r'}[[t]]$ and for every proper subsector  $\Pi_{p,q}'=\Pi_{p,q}(a',b',\rho)$ with $0<\rho<r'$ and $N\in\N$ there exists $C_N(\Pi_{p,q}')>0$ such that:

\begin{equation}\label{formula def asym x^pe^q}
\left|f(x_1,x_2)-\sum_{n=0}^{N-1}f_n(x_1,x_2)(x_1^px_2^q)^n\right|\leq C_N(\Pi_{p,q}')|x_1^px_2^q|^N,\hspace{0.3cm} \text{ on } \Pi_{p,q}'.
\end{equation}

The asymptotic expansion is said to be of \textit{$s-$Gevrey type} (denoted by $f\sim^{(p,q)}_s \hat{f}$ on $\Pi_{p,q}$) if it is possible to choose $C_N(\Pi_{p,q}')=C(\Pi_{p,q}')A(\Pi_{p,q}')^N N!^s$ for some constants $C(\Pi_{p,q}')$, $A(\Pi_{p,q}')$ independent of $N$.

There are different characterizations of asymptotic expansions in a monomial, useful in certain situations. For instance, $f\sim^{(p,q)} \hat{f}$ on $\Pi_{p,q}(a,b,r)$ if and only if $T_{p,q}(f)_\rho\sim \hat{T}_{p,q}(\hat{f})$ on $V(a,b,\rho^2)$, for every $0<\rho<r'$. Here $\hat{T}_{p,q}\hat{f}\in\mathcal{E}_{r'}^{(p,q)}[[t]]$ and $0<r'<r$. The result is also valid for asymptotic expansions of $s-$Gevrey type. Indeed, if $f\sim^{(p,q)} \hat{f}$ on $\Pi_{p,q}(a,b,r)$ then applying (\ref{inq Tpq}) to (\ref{formula def asym x^pe^q}) with $K(u)=u^{N+1}$ we obtain

$$\left\|T_{p,q}(f)_{\rho'}(t)-\sum_{n=0}^{N}f_n t^n\right\|_{\rho'}\leq \frac{\rho^2C_{N+1}(\Pi_{p,q}')}{\left(1-(\frac{\rho'}{\rho})^{\frac{1}{p}}\right)\left(1-(\frac{\rho'}{\rho})^{\frac{1}{q}}\right)}|t|^{N},$$

\noindent on $V(a',b',\rho')$ for any $0<\rho'<\rho<r'$ and thus

$$\left\|T_{p,q}(f)_{\rho'}(t)-\sum_{n=0}^{N-1}f_n t^n\right\|_{\rho'}\leq \|f_N\|_{\rho'}|t|^N+\frac{\rho^2C_{N+1}(\Pi_{p,q}')}{\left(1-(\frac{\rho'}{\rho})^{\frac{1}{p}}\right)\left(1-(\frac{\rho'}{\rho})^{\frac{1}{q}}\right)}|t|^{N}.$$

The converse is trivial, i.e. replacing $t$ by $x_1^px_2^q$. We note that for the $s-$Gevrey case we can conclude from $f\sim^{(p,q)}_s \hat{f}$ that $\hat{f}\in E[[x_1,x_2]]_s^{(p,q)}$. In fact, from the previous inequalities it is easy to see that

$$\|f_N\|_{\rho'}\leq \frac{\rho^2C(\Pi_{p,q}')A(\Pi_{p,q}')^NN!^s}{\left(1-(\frac{\rho'}{\rho})^{\frac{1}{p}}\right)\left(1-(\frac{\rho'}{\rho})^{\frac{1}{q}}\right)} \left(A(\Pi_{p,q}')(N+1)^s+\frac{1}{|t|}\right),$$

\noindent where $C_N(\Pi_{p,q}')=C(\Pi_{p,q}')A(\Pi_{p,q}')^N N!^s$ and then fixing $|t|=\rho'/2$ we obtain a Gevrey bound for $f_N$.

Another characterization of monomial asymptotic expansions is given below, this time approximating by holomorphic functions. For further references we state the result as a proposition.

\begin{prop}\label{equiv f iff f_n for x^pe^q}Let $f\in\mathcal{O}(\Pi_{p,q},E)$, $\Pi_{p,q}=\Pi_{p,q}(a,b,r)$, be an holomorphic map. The following assertions are equivalent:
\begin{enumerate}
\item $f\sim^{(p,q)} \hat{f}$ on $\Pi_{p,q}$,

\item There is $R>0$ and a sequence $(f_N)_{N\in\N}\subset \mathcal{O}_b(D_R^2,E)$ such that for every subsector $\Pi_{p,q}'$ of $\Pi_{p,q}$ and $N\in\N$ there is a positive constant $A_N(\Pi_{p,q}')$ such that \begin{equation}\label{f-f_N}|f(x_1,x_2)-f_N(x_1,x_2)|\leq A_N(\Pi_{p,q}')|x_1^px_2^q|^N,\hspace{0.3cm}\text{ on }\Pi_{p,q}'\cap D_R^2.\end{equation}
\end{enumerate}
Analogously, $f\sim^{(p,q)}_s \hat{f}$ on $\Pi_{p,q}$ if and only if (2) is satisfied with $A_N(\Pi_{p,q}')=C(\Pi_{p,q}')A(\Pi_{p,q}')^NN!^s$ for some $C(\Pi_{p,q}'), A(\Pi_{p,q}')$ independent of $N$ and there are positive constants $B,D$ such that $\|f_N\|_R\leq DB^NN!^s$ for all $N\in\N$. In any case, $\hat{f}$ is given by the limit of the Taylor series at the origin of $f_N$, in the $\mathfrak{m}-$topology of $E[[x_1,x_2]]$.
\end{prop}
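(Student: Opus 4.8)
The plan is to handle the two implications separately and to reduce the substantive one to its one--variable counterpart, Remark \ref{equiv f-f_n}, by means of the operator $T_{p,q}$.

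The implication $(1)\Rightarrow(2)$ is elementary. Writing $\hat T_{p,q}\hat f=\sum_{n\ge 0}f_n t^n\in\mathcal E^{(p,q)}_{r'}[[t]]$, I would take for $f_N$ the partial sums $f_N(x_1,x_2)=\sum_{n=0}^{N-1}f_n(x_1,x_2)(x_1^px_2^q)^n$. Since each $f_n\in\mathcal E^{(p,q)}_{r'}$ is bounded and holomorphic on $D_{r'}^2$, the finite sum $f_N$ lies in $\mathcal O_b(D_{r'}^2,E)$, and with $R=r'$ the inequality \eqref{f-f_N} is nothing but the defining inequality \eqref{formula def asym x^pe^q}, with $A_N(\Pi_{p,q}')=C_N(\Pi_{p,q}')$. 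For the Gevrey half I would use that $f\sim^{(p,q)}_s\hat f$ forces $\hat f\in E[[x_1,x_2]]^{(p,q)}_s$, hence $\|f_n\|_{r'}\le C'A'^{\,n}n!^s$; choosing $R\le r'$ small enough that $A'R^{p+q}\le 1$, a term--by--term estimate gives $\|f_N\|_R\le C'\sum_{n<N}n!^s\le C'\,2^N N!^s$, which is the required bound.

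The content lies in $(2)\Rightarrow(1)$, where the idea is to transport the hypothesis to one variable. Fix $0<\rho<r$, with the sector taken small enough to sit inside $D_R^2$, and set $F:=T_{p,q}(f)_\rho$ and $F_N:=T_{p,q}(f_N)_\rho$, regarded as functions of $t$ valued in the Banach space $\mathcal E^{(p,q)}_\rho$. Because each $f_N$ extends holomorphically across the whole polydisc, the coefficient functions appearing in \eqref{Tf} are holomorphic at $t=0$, so $F_N$ is actually holomorphic on a full disc $D$ about the origin. Feeding $K(u)=A_N(\Pi_{p,q}')u^N$ into inequality \eqref{inq Tpq} applied to $f-f_N$ yields, on each subsector $V(a',b',\rho')$,
\begin{equation*}
\|F(t)-F_N(t)\|_{\rho'}\le C'\,A_N(\Pi_{p,q}')\,|t|^{\,N-1}.
\end{equation*}
The loss of one power of $t$ is immaterial: replacing $F_N$ by $F_{N+1}$ produces a sequence in $\mathcal O_b(D,\mathcal E^{(p,q)}_\rho)$ approximating $F$ to order $|t|^N$ on every subsector, which is exactly the hypothesis of Remark \ref{equiv f-f_n} for the one--variable, $\mathcal E^{(p,q)}_\rho$--valued function $F$. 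That remark then gives $F\sim\hat F$ on $V(a,b,\rho^2)$, with $\hat F$ the $\mathfrak m$--adic limit of the Taylor series in $t$ of the $F_{N+1}$. Running this for every admissible $\rho$ and invoking the characterization $f\sim^{(p,q)}\hat f\iff T_{p,q}(f)_\rho\sim\hat T_{p,q}\hat f$ recalled above, I recover $f\sim^{(p,q)}\hat f$ with $\hat f:=\hat T_{p,q}^{-1}\hat F\in\mathcal C$, well defined since $\hat T_{p,q}$ is bijective via the decomposition \eqref{para definir Tpq}. As the Taylor series of $F_{N+1}$ is $\hat T_{p,q}$ applied to the Taylor series of $f_{N+1}$, the series $\hat f$ is precisely the $\mathfrak m$--adic limit of the Taylor series of the $f_N$, which is the last assertion.

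The main obstacle I anticipate is bookkeeping rather than conceptual, and it concentrates in the Gevrey refinement of $(2)\Rightarrow(1)$: I must transfer the uniform bound $\|f_N\|_R\le DB^NN!^s$ into a bound $\|F_{N+1}\|\le\tilde D\tilde B^N N!^s$ on a fixed disc. For this I would insert the constant majorant $K\equiv DB^NN!^s$ into \eqref{inq Tpq}, evaluate on a circle $|t|=\mathrm{const}$ to neutralise the spurious factor $1/|t|$, and then apply the maximum principle for Banach--valued holomorphic functions; combined with $(N+1)!^s\le\tilde A^N N!^s$, all constants stay of Gevrey type, so the Gevrey case follows from the Gevrey version of Remark \ref{equiv f-f_n}.
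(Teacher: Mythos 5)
Your proposal is correct and follows essentially the same route as the paper: for $(1)\Rightarrow(2)$ take the partial sums $f_N=\sum_{n<N}g_n(x_1^px_2^q)^n$, and for $(2)\Rightarrow(1)$ push everything through $T_{p,q}$, use inequality \eqref{inq Tpq} with $K(u)=A_N(\Pi_{p,q}')u^N$, absorb the lost power of $t$ by the index shift $N\mapsto N+1$, and conclude via Remark \ref{equiv f-f_n}. The paper compresses the Gevrey bookkeeping into one sentence, whereas you spell out the transfer of the bounds $\|f_N\|_R\leq DB^NN!^s$ to the functions $T_{p,q}(f_{N+1})$ via the maximum principle; this is a legitimate filling-in of the same argument, not a different one.
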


\begin{proof}We only prove the case of Gevrey asymptotics. If $f\sim_s^{(p,q)} \hat{f}$ on $\Pi_{p,q}$ and $\hat{T}_{p,q}(\hat{f})(t)=\sum g_n t^n$ then $f_N(x_1,x_2)=\sum_{n<N} g_n(x_1,x_2) (x_1^px_2^q)^n$ satisfies the requirements. Conversely, suppose we have such a family. Note that each $T_{p,q}(f_N)_R$ is holomorphic on $D_R$ and has $\hat{T}_{p,q}(f_N)$ as Taylor series at the origin. Let $g_{N+1}=T_{p,q}(f_N)_R$. Applying inequalities (\ref{inq Tpq}) to (\ref{f-f_N}) with $K(u)=u^N$ it follows that

$$\|T_{p,q}(f)_{\rho'}(t)-g_{N+1}\|_{\rho'}\leq \frac{\rho^2A_{N+1}(\Pi_{p,q}')}{\left(1-(\frac{\rho'}{\rho})^{\frac{1}{p}}\right)\left(1-(\frac{\rho'}{\rho})^{\frac{1}{q}}\right)}|t|^N,$$

\noindent in the corresponding sector where $0<|t|<\rho'<\min\{\rho,R\}$. Using Remark \ref{equiv f-f_n} we obtain that $T_{p,q}(f)_{\rho'}$ has an asymptotic expansion $\hat{T}_{p,q}(\hat{f})$ given by the limit of the series $\hat{T}_{p,q}(f_N)$ in the $\mathfrak{m}-$topology of $\mathcal{E}^{(p,q)}_r[[t]]$ and thus $f\sim^{(p,q)}\hat{f}$ on $\Pi_{p,q}$ as we wanted to show. The Gevrey case follows noting that the $(g_{N+1})$ have Gevrey bounds if the $(f_N)$ have them.

\end{proof}


In this context the analogous result to Watson's lemma reads as follows: if  $f\sim_s^{(p,q)} 0$ on $\Pi_{p,q}(a,b,r)$ and $b-a>s\pi$ then $f\equiv 0$. Finally we can recall the definition of $k-$summability in a monomial.

\begin{defi}Let $\hat{f}\in \mathcal{C}$, let $k>0$ and let $d$ be a direction.
\begin{enumerate}
\item The formal series $\hat{f}$ is called \textit{$k-$summable in $x_1^px_2^q$ on $\Pi_{p,q}=\Pi_{p,q}(a,b,r)$} if $b-a>\pi/k$ and there exists a map $f\in\mathcal{O}(\Pi_{p,q},E)$ such that $f\sim_{1/k}^{(p,q)} \hat{f}$ on $\Pi_{p,q}$. If $d$ is the bisecting direction of $\Pi_{p,q}$ then $\hat{f}$ is called \textit{$k-$summable in $x_1^px_2^q$ in the direction $d$} and $f$ is called \textit{the $k-$sum in $x_1^px_2^q$} of $\hat{f}$ in direction $d$ (or on $\Pi_{p,q}$).

\item The formal series $\hat{f}$ is called \textit{$k-$summable in $x_1^px_2^q$}, if it is $k-$summable in $x_1^px_2^q$ in every direction with finitely many exceptions mod. $2\pi$ (the singular directions).
\end{enumerate}

The algebra of $k-$summable series in $x_1^px_2^q$ in the direction $d$ will be denoted by $E\{x_1,x_2\}^{(p,q)}_{1/k,d}$ and the algebra of $k-$summable series in $x_1^px_2^q$ will be denoted by $E\{x_1,x_2\}^{(p,q)}_{1/k}$.
\end{defi}

We have seen than $\hat{f}$ is $k-$summable in $x_1^px_2^q$ (resp. $k-$summable in direction $d$) if and only if  $\hat{T}_{p,q}(\hat{f})_\rho$ is $k-$summable (resp. $k-$summable in direction $d$) for all $\rho$ small enough. With this characterization it is possible to carry on known theorems of summability in our context.

To finish this section we want to give an example of monomial summability based on an example of Poincar\'{e}, see \cite{Sauzin}.

\begin{eje}Consider the series $\hat{f}=\sum a_{n,m} x_1^n x_2^m$, $a_{0,0}=0$ and $a_{n,m}=(-|n-m|)^{\min\{n,m\}}$ for the other terms. We want to study its $1$-summability in $x_1x_2$. It is immediate to check that $\hat{F}(t)(x_1,x_2)=$ $\hat{T}_{1,1}(\hat{f})(t)(x_1,x_2)=$ $\sum (b_n(x_1)+c_n(x_2))t^n$ where:

$$b_0(x_1)=\sum_{m=1}^\infty x_1^m, b_n(x_2)=(-1)^n\sum_{m=0}^\infty m^nx_2^m, \hspace{0.3cm} c_n(x_2)=(-1)^n\sum_{m=1}^\infty m^nx_2^m.$$

To calculate its $1-$sum we apply the Borel-Laplace method (see e.g. \cite{Balser2}): its formal $1-$Borel transform is given by

\begin{align*}
\hat{\mathcal{B}}_1(t\hat{F}(t))(\xi)(x_1,x_2)&=\sum_{n=0}^\infty (b_n(x_1)+c_n(x_2))\frac{\xi^n}{n!}\\
&=\frac{1}{1-x_1e^{-\xi}}+\frac{1}{1-x_2 e^{-\xi}}-2=\varphi(x_1,x_2,\xi).
\end{align*}

For a fixed $(x_1,x_2)$ with $|x_1|, |x_2|<1$, the radius of convergence of the above series is the minimum between $\text{dist}(u,2\pi i\Z)$ and $\text{dist}(v,2\pi i\Z)$, where $x_1=e^u$, $x_2=e^v$ and $\text{Re}(u), \text{Re}(v)<0$. The domain of definition of its analytic continuation $\varphi$ is the set conformed by all the triples $(x_1,x_2,\xi)$ in $\C^3$ such that $e^\xi\neq x_1$ and $e^\xi\neq x_2$.

If we fix $0<\rho<r<1$ it follows that $|\varphi(x_1,x_2,\xi)|\leq \frac{2}{1-\rho/r}$ on $D_\rho^2\times \{\xi\in \C | \text{Re}(\xi)\geq \log(r)\}$. Then for a fixed $(x_{10},x_{20})$ in $D_\rho^2$ the power series $\hat{F}(t)(x_{10},x_{20})$ with complex coefficients is $1-$summable in the classical sense in any direction $d\neq \text{arg}(u-2\pi ik), \text{arg}(v-2\pi ik)$, $k\in\Z$. Then $\hat{F}$ is $1-$summable in the space $\mathcal{E}^{(1,1)}_\rho$ only in directions $d$ included in $(-\pi/2, \pi/2)$. To calculate its $1-$sum, for instance in direction $d=0$, we calculate its Laplace transform given by

$$\frac{1}{t}\int_0^{+\infty} \left(\frac{1}{e^\xi-x_1}+\frac{1}{e^\xi-x_2}\right)e^{\xi-\xi/t} d\xi-2=\sum_{n=1}^\infty \frac{x_1^n+x_2^n}{1+nt}.$$

Thus the $1-$sum $f$ in $x_1x_2$ of $\hat{f}$ in direction $d=0$ is obtained by replacing $t=x_1x_2$ in the above expression, and has domain of definition $\{(x_1,x_2)\in\C^2 \hspace{0.1cm}|\hspace{0.1cm} |x_1|, |x_2|<1 \text{ and } x_1x_2\neq-1/n, n\in\N_{>0}\}$.
\end{eje}

In the example, finding the $k-$sum in $x_1^px_2^q$ was possible studying the $k-$sum of the corresponding series under the operator $\hat{T}_{p,q}$. In general such computations are not so simple but fortunately the sum can also be computed as the $pk-$sum (resp. $qk-$sum) of $\hat{f}$ as a formal power series in $x_1$ (resp. $x_2$) with coefficients holomorphic maps on a sector in $x_2$ (resp. $x_1$) with small enough opening.

\section{Tauberian properties for Summability}

The goal of this section is to describe some tauberian theorems for monomial summability, for instance, relate different levels of summability for different monomials. 

In the context of one variable we have the following two statements that provide tauberian properties for $k-$summability and that we will recover in the monomial case.

\begin{prop}\label{k-sum en todas direcciones es convergent}Let $\hat{f}\in E\{x\}_{1/k}$ have no singular directions. Then $\hat{f}$ is convergent.
\end{prop}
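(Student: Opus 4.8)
The plan is to realize the $k$-sums in all directions as a single holomorphic function on a full punctured disc, and then to invoke the removable singularity theorem. Since $\hat{f}$ has no singular directions, for \emph{every} direction $d$ there is a sector $V_d=V(a_d,b_d,r_d)$ bisected by $d$ with opening $b_d-a_d>\pi/k$ and a sum $f_d\in\mathcal{O}(V_d,E)$ with $f_d\sim_{1/k}\hat{f}$ on $V_d$. The first step is to upgrade this pointwise-in-$d$ data to uniform data: I claim there exist $\varepsilon>0$ and $r>0$ such that every direction admits a sum on a sector of opening at least $\pi/k+2\varepsilon$ and radius at least $r$. To prove this, for each $d$ write the opening of $V_d$ as $\pi/k+2\varepsilon_d$ with $\varepsilon_d>0$; restricting $f_d$ shows that every direction $d'$ with $|d'-d|\leq\varepsilon_d/2$ admits a sum on a sector bisected by $d'$ of opening $\pi/k+\varepsilon_d$ and radius $r_d$. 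The open arcs $(d-\varepsilon_d/2,d+\varepsilon_d/2)$ cover the compact circle of directions, so a finite subcover indexed by $d_1,\dots,d_m$ yields the uniform constants $\varepsilon:=\tfrac12\min_j\varepsilon_{d_j}>0$ and $r:=\min_j r_{d_j}>0$.

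With uniform opening in hand, the second step is to show that neighbouring sums glue. For any two directions $d,d'$ with $|d-d'|<2\varepsilon$, the sectors (now taken with opening $\pi/k+2\varepsilon$ and radius $r$) intersect in a sector of opening $\pi/k+2\varepsilon-|d-d'|>\pi/k$. On this overlap $f_d-f_{d'}\sim_{1/k}0$, so Watson's lemma with $s=1/k$, whose hypothesis $b-a>s\pi=\pi/k$ is exactly met, forces $f_d\equiv f_{d'}$ there. Choosing directions $0=\delta_0<\delta_1<\cdots<\delta_M=2\pi$ with consecutive gaps smaller than $2\varepsilon$, the corresponding sums agree on successive overlaps; since $\delta_0$ and $\delta_M$ represent the same direction, the chain closes up consistently. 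Consequently the $f_{\delta_i}$ patch together into a single-valued $f\in\mathcal{O}(\{0<|x|<r\},E)$ with $f\sim_{1/k}\hat{f}$ on each sector of the cover.

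Finally, the third step concludes. From $f\sim\hat{f}$ we obtain in particular $\|f(x)-a_0\|\leq C|x|$ near the origin, so $f$ is bounded on the punctured disc and, by the removable singularity theorem, extends holomorphically across $0$. Its Taylor series at the origin is then its (unique) asymptotic expansion, namely $\hat{f}$, whence $\hat{f}$ is convergent. The one genuinely delicate point is the uniformity obtained in the first step: the naive cover of the circle by the sectors $V_d$ need not produce overlaps of opening exceeding the Watson threshold $\pi/k$, and it is precisely the compactness argument that guarantees a positive margin $2\varepsilon$ and lets Watson's lemma force agreement. Once the sums are known to coincide on overlaps, the single-valuedness around the loop and the removable-singularity step are routine.
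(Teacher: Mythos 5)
Your argument is correct. Note that the paper offers no proof of this proposition: it is recorded as a classical tauberian fact of one-variable $k$-summability (essentially due to Ramis; see Martinet--Ramis or Balser's book), so there is nothing in the text to compare against --- what you have written is the standard proof. The compactness argument extracting a uniform margin $2\varepsilon$ is exactly the right (and genuinely necessary) step, and Watson's lemma as quoted in Section 2 applies verbatim on the overlaps of opening greater than $\pi/k$. The one point you gloss over is the well-definedness of the glued function where \emph{non-consecutive} members of the chain overlap, which happens as soon as $\pi/k+2\varepsilon$ exceeds the angular distance between their bisecting directions (and always when $k\leq 1$, each sector then having opening greater than $\pi$): agreement on successive overlaps does not by itself give agreement there. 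It does follow, since for $i<l<j$ the overlap of $V_{\delta_i}$ and $V_{\delta_j}$, read with arguments in intervals of $\R$, is contained in $V_{\delta_l}$ (equal openings, monotonically ordered bisecting directions), so induction on $j-i$ propagates $f_{\delta_i}=f_{\delta_j}$ to every overlapping pair, and single-valuedness then follows from $f_{\delta_M}=f_{\delta_0}$. With that line supplied, the uniform bound $\|f(x)-a_0\|\leq C|x|$ over the finitely many proper subsectors and the Banach-valued removable singularity theorem complete the proof exactly as you describe.
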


\begin{prop}[Ramis]\label{two different levels of sum implies convergence} Let $0<k<l$ be positive numbers. Then $E[[x]]_{1/l}\cap E\{x\}_{1/k}=E\{x\}_{1/l}\cap E\{x\}_{1/k}=E\{x\}.$
\end{prop}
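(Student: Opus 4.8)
The plan is to reduce everything to the single inclusion $E[[x]]_{1/l}\cap E\{x\}_{1/k}\subseteq E\{x\}$. Indeed, a convergent series is $m$-summable for every $m>0$, so $E\{x\}\subseteq E\{x\}_{1/l}\cap E\{x\}_{1/k}$; and since an $l$-summable series is $(1/l)$-Gevrey, i.e. $E\{x\}_{1/l}\subseteq E[[x]]_{1/l}$, we also have $E\{x\}_{1/l}\cap E\{x\}_{1/k}\subseteq E[[x]]_{1/l}\cap E\{x\}_{1/k}$. Thus, once the displayed inclusion is proved, the chain $E\{x\}\subseteq E\{x\}_{1/l}\cap E\{x\}_{1/k}\subseteq E[[x]]_{1/l}\cap E\{x\}_{1/k}\subseteq E\{x\}$ collapses and the three sets coincide. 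So it suffices to show that a series which is simultaneously $(1/l)$-Gevrey and $k$-summable, with $k<l$, is convergent.

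For the substantial inclusion I would pass to the formal Borel transform of order $k$, writing $\hat{g}=\hat{\mathcal{B}}_k\hat{f}=\sum_n (a_n/\Gamma(1+n/k))\xi^n$. The $(1/l)$-Gevrey bound $\|a_n\|\leq CA^n n!^{1/l}$ together with Stirling's formula shows that $n!^{1/l}/\Gamma(1+n/k)$ decays faster than any geometric sequence (this is where $1/l<1/k$ enters), so $\hat{g}$ converges to an entire function $g$, and a direct estimate of its Taylor coefficients gives that $g$ has order at most $kl/(l-k)$. The key point is the elementary dictionary between the growth of $g$ and the size of the $a_n$: the bound $\|g(\xi)\|\leq C\exp(b|\xi|^k)$ on all of $\C$ (exponential type of order $k$) holds if and only if $\|a_n\|\leq C'B^n$, that is, if and only if $\hat{f}$ is convergent. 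Hence the whole problem reduces to upgrading growth information on $g$ to this single global bound.

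Now I would feed in $k$-summability. By the Borel--Laplace description recalled in Section 2, for every non-singular direction $d$ the function $g$ satisfies a bound $\|g(\xi)\|\leq C_d\exp(b_d|\xi|^k)$ on a small sector about the ray $\arg\xi=d$, and $k$-summability means all directions are of this type except finitely many; gluing these bounds over each arc between two consecutive singular directions yields $\|g(\xi)\|\leq C\exp(b|\xi|^k)$ off finitely many rays. The main obstacle is precisely the behaviour of $g$ near those finitely many singular rays, where the bound may a priori degenerate and $g$ could grow at its full order $kl/(l-k)>k$; the crux is therefore a Phragm\'en--Lindel\"of argument proving that an entire function of finite order which is of exponential type $k$ off finitely many rays is of exponential type $k$ everywhere, after which the dictionary above forces $\|a_n\|\leq C'B^n$ and hence convergence. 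A parallel route, closer to the tools of this section, works on the sum side: the difference of two consecutive $k$-sums across a singular direction is flat of order $k$, i.e. $\sim_{1/k}0$, but the saddle-point balance coming from the order $kl/(l-k)$ of $g$ improves this to $\sim_{1/l}0$; since the overlap of the two sectors can be taken of opening exceeding $\pi/k>\pi/l$, the level-$l$ version of Watson's lemma forces the jump to vanish, so $\hat{f}$ has no singular directions and Proposition \ref{k-sum en todas direcciones es convergent} gives convergence. Either way, the analytic heart of the matter is the control of $g$, equivalently of the Stokes jumps, near the singular directions.
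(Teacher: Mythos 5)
The paper does not actually prove this proposition: it is recalled as a classical Tauberian theorem of Ramis, with the proof deferred to the literature (the introduction points to \cite{Mar-R}), so there is no in-paper argument to compare against. Your main line of attack is essentially the standard published proof, and it is sound. The reduction of the three-set identity to the single inclusion $E[[x]]_{1/l}\cap E\{x\}_{1/k}\subseteq E\{x\}$ is correct; the $(1/l)$-Gevrey bound does make $\hat{\mathcal{B}}_k\hat{f}$ entire of finite order at most $\kappa=kl/(l-k)$; $k$-summability gives exponential growth of order $k$ on sectors around every non-singular direction, hence, by a finite covering of each closed arc between consecutive singular rays, everywhere outside arbitrarily thin sectors around those finitely many rays; and since these thin sectors may be taken of opening less than $\pi/\kappa$ (and than $\pi/k$, so that $g(\xi)e^{-c\xi^{k}}$ is the right auxiliary function), Phragm\'{e}n--Lindel\"{o}f upgrades the order-$k$ bound to all of $\C$ and the coefficient dictionary yields convergence. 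The one weak point is the ``parallel route'' at the end: the claim that the Stokes jump $f_1-f_2$, flat of order $k$, is automatically flat of order $l$ because $\hat{f}$ is $(1/l)$-Gevrey is not justified as stated --- the sums are only known to be $(1/k)$-Gevrey asymptotic to $\hat{f}$, and the contour-integral representation of the jump around a singular ray does not even converge while $g$ is only known to grow at order $\kappa>k$ there. Since that sketch is offered only as an alternative, the burden rests on the first route, which carries it.
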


As for a fixed monomial, summability in the monomial is equivalent to summability in the classical sense, we obtain immediately the following statements.

\begin{prop}\label{no sing directions then convergence for monomials}If $\hat{f}\in E\{x_1,x_2\}^{(p,q)}_{1/k}$ has no singular directions then $\hat{f}$ is convergent.
\end{prop}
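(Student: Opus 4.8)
The plan is to push the problem through the operator $\hat{T}_{p,q}$ down to the one-variable situation, and then quote the tauberian property for classical $k$-summability that is already available, only now applied to a series whose coefficients live in a Banach space of holomorphic functions. The whole argument is a chain of the equivalences recorded in Section 2.

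First I would use the characterization stated just above this proposition: $\hat{f}$ is $k$-summable in $x_1^px_2^q$ in a direction $d$ if and only if, for all $\rho>0$ small enough, $\hat{T}_{p,q}(\hat{f})_\rho$ is $k$-summable in direction $d$ in the classical sense, viewed as a series in $t$ with coefficients in the Banach space $\mathcal{E}^{(p,q)}_\rho$. In particular a direction $d$ is singular for $\hat{f}$ in the monomial sense exactly when it is singular for $\hat{T}_{p,q}(\hat{f})_\rho$. Since by hypothesis $\hat{f}$ has no singular directions, the series $\hat{T}_{p,q}(\hat{f})_\rho\in\mathcal{E}^{(p,q)}_\rho[[t]]$ is $k$-summable in every direction, that is, it has no singular directions either.

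Next I would apply Proposition \ref{k-sum en todas direcciones es convergent}, which is stated for an \emph{arbitrary} complex Banach space $E$, to the space $\mathcal{E}^{(p,q)}_\rho$. This immediately gives that $\hat{T}_{p,q}(\hat{f})_\rho$ is convergent, i.e. $\hat{T}_{p,q}(\hat{f})\in\mathcal{E}^{(p,q)}_\rho\{t\}$. Finally I would invoke the equivalence recorded in Section 2 in the case $s=0$, namely that $\hat{f}\in E\{x_1,x_2\}$ if and only if $\hat{T}_{p,q}(\hat{f})\in\mathcal{E}^{(p,q)}_r\{t\}$ for some $r>0$; taking $r=\rho$ from the previous step yields the convergence of $\hat{f}$, as desired.

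The only delicate point — and thus the main thing to verify rather than a genuine obstacle — is that Proposition \ref{k-sum en todas direcciones es convergent} really does apply verbatim in the Banach-valued setting with $\mathcal{E}^{(p,q)}_\rho$ in the role of $E$, and that the correspondence between singular directions of $\hat{f}$ and of $\hat{T}_{p,q}(\hat{f})_\rho$ is exact (no spurious or lost directions when passing through $\hat{T}_{p,q}$). Once these two facts are granted, the statement follows directly from the stated equivalences, which is precisely the ``we obtain immediately'' remark preceding the proposition.
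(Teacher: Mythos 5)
Your argument is correct and is exactly the route the paper intends: the paper derives this proposition ``immediately'' from the equivalence between $k$-summability of $\hat{f}$ in $x_1^px_2^q$ (in a direction $d$) and classical $k$-summability of $\hat{T}_{p,q}(\hat{f})_\rho$ with values in the Banach space $\mathcal{E}^{(p,q)}_\rho$, combined with Proposition \ref{k-sum en todas direcciones es convergent} and the $s=0$ characterization of convergence. You have simply made explicit the same chain of reductions, including the two points (Banach-valued validity and exact correspondence of singular directions) that the paper leaves tacit.
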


\begin{prop}\label{p,q,1/k intersection p,q,1/k'}Let $0<k<l$ be positive real numbers. Then for any monomial $x_1^px_2^q$ we have $E\{x_1,x_2\}^{(p,q)}_{1/k}\cap E\{x_1,x_2\}^{(p,q)}_{1/l}=E\{x_1,x_2\}^{(p,q)}_{1/k}\cap E[[x_1,x_2]]^{(p,q)}_{1/l}=E\{x_1,x_2\}$.
\end{prop}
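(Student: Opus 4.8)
The plan is to reduce Proposition \ref{p,q,1/k intersection p,q,1/k'} to its one-variable analogue, Proposition \ref{two different levels of sum implies convergence}, by transporting everything through the operator $\hat{T}_{p,q}$. Recall the characterization established in the text: a series $\hat{f}\in\mathcal{C}$ is $k-$summable in $x_1^px_2^q$ (in a direction $d$) if and only if $\hat{T}_{p,q}(\hat{f})_\rho$ is $k-$summable (in direction $d$) in the Banach space $\mathcal{E}^{(p,q)}_\rho$ for all $\rho$ small enough. Since the monomial $x_1^px_2^q$ is fixed throughout, I do not need to change monomials: I only need to compare two orders of summability, $k$ and $l$, with respect to one and the same $\hat{T}_{p,q}$. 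The key observation is that the theory of $k-$summability in one variable, including Ramis' Tauberian theorem, holds verbatim for series with coefficients in an arbitrary Banach space $E$; I would apply it with $E$ replaced by $\mathcal{E}^{(p,q)}_\rho$.

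Concretely, I would argue the two claimed equalities as follows. First I treat the nontrivial inclusion of the middle set into $E\{x_1,x_2\}$. Suppose $\hat{f}\in E\{x_1,x_2\}^{(p,q)}_{1/k}\cap E[[x_1,x_2]]^{(p,q)}_{1/l}$. By the summability characterization, $g:=\hat{T}_{p,q}(\hat{f})_\rho\in\mathcal{E}^{(p,q)}_\rho[[t]]$ is $k-$summable as a series in $t$ over the Banach space $\mathcal{E}^{(p,q)}_\rho$, hence in particular $g\in\mathcal{E}^{(p,q)}_\rho[[t]]_{1/k}$. On the other hand, the hypothesis $\hat{f}\in E[[x_1,x_2]]^{(p,q)}_{1/l}$ means exactly that $g$ is an $1/l-$Gevrey series in $t$, i.e. $g\in\mathcal{E}^{(p,q)}_\rho[[t]]_{1/l}$. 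Applying the Banach-space version of Proposition \ref{two different levels of sum implies convergence} (with $0<k<l$, so that the Gevrey orders $1/l<1/k$ interact correctly) to the single variable $t$, I conclude that $g$ is convergent, i.e. $g\in\mathcal{E}^{(p,q)}_\rho\{t\}$. But it was noted earlier in the text that $\hat{f}\in E\{x_1,x_2\}$ if and only if $\hat{T}_{p,q}(\hat{f})_\rho\in\mathcal{E}^{(p,q)}_\rho\{t\}$ for some $\rho>0$; hence $\hat{f}$ is convergent.

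Second, the equality of the two intersections and the trivial inclusions: the inclusion $E\{x_1,x_2\}^{(p,q)}_{1/k}\cap E\{x_1,x_2\}^{(p,q)}_{1/l}\subseteq E\{x_1,x_2\}^{(p,q)}_{1/k}\cap E[[x_1,x_2]]^{(p,q)}_{1/l}$ is immediate, since $l-$summability in $x_1^px_2^q$ forces membership in $E[[x_1,x_2]]^{(p,q)}_{1/l}$ (as recorded after Proposition \ref{equiv f iff f_n for x^pe^q}, a series admitting an $1/l-$Gevrey monomial asymptotic expansion is itself $1/l-$Gevrey in the monomial). The reverse inclusion follows from the first part together with the fact that any convergent series is trivially both $k-$ and $l-$summable in $x_1^px_2^q$. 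Chaining these, all three sets coincide with $E\{x_1,x_2\}$.

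The main obstacle I anticipate is verifying that Proposition \ref{two different levels of sum implies convergence} is genuinely available in the Banach-valued setting needed here, rather than only over $\C$. The statement in the excerpt is already phrased for a general Banach space $E$, so in principle I may quote it directly with $E=\mathcal{E}^{(p,q)}_\rho$; the only subtlety is that $k-$summability over $\mathcal{E}^{(p,q)}_\rho$ must be understood in the same Borel--Laplace sense and that the passage between ``$k-$summable in $x_1^px_2^q$'' and ``$\hat{T}_{p,q}(\hat{f})_\rho$ is $k-$summable over $\mathcal{E}^{(p,q)}_\rho$'' is clean for \emph{both} orders $k$ and $l$ simultaneously and for a common radius $\rho$. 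I would make sure the radius can be chosen uniformly: since finitely many constraints ($k-$summability on one side, $1/l-$Gevrey on the other) each hold for all sufficiently small $\rho$, a single $\rho$ serves both, and the conclusion $g\in\mathcal{E}^{(p,q)}_\rho\{t\}$ then yields convergence of $\hat{f}$ without further bookkeeping.
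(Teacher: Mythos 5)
Your proposal is correct and follows essentially the same route as the paper: the authors derive this proposition ``immediately'' from the one-variable Ramis statement (Proposition \ref{two different levels of sum implies convergence}, already phrased for Banach-valued series) via the equivalence between $k-$summability in $x_1^px_2^q$ and $k-$summability of $\hat{T}_{p,q}(\hat{f})_\rho$ in $\mathcal{E}^{(p,q)}_\rho$, exactly as you do. Your additional care about choosing a common radius $\rho$ and about the inclusion $E\{x_1,x_2\}^{(p,q)}_{1/l}\subseteq E[[x_1,x_2]]^{(p,q)}_{1/l}$ simply makes explicit what the paper leaves implicit.
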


We can extend the last proposition when the monomials involved are not necessarily the same. Thereby we start by relating summability in a monomial with summability in some power of this monomial.

\begin{prop}\label{p,q y Mp,Mq}Let $k>0$, let $p,q, M\in\N_{>0}$ and let $d$ be a direction. Then $E\{x_1,x_2\}_{1/k,d}^{(p,q)}=E\{x_1,x_2\}_{M/k,Md}^{(Mp,Mq)}$ and in particular $E\{x_1,x_2\}_{1/k}^{(p,q)}=E\{x_1,x_2\}_{M/k}^{(Mp,Mq)}$.
\end{prop}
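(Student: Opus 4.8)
The plan is to prove the directional identity $E\{x_1,x_2\}_{1/k,d}^{(p,q)}=E\{x_1,x_2\}_{M/k,Md}^{(Mp,Mq)}$ directly from the definition of $k-$summability in a monomial, exploiting that $x_1^{Mp}x_2^{Mq}=(x_1^px_2^q)^M$, and then to deduce the all-directions statement. The formal side is already settled: since the identity $E[[x_1,x_2]]_{Ms}^{(Mp,Mq)}=E[[x_1,x_2]]_s^{(p,q)}$ with $s=1/k$ gives $E[[x_1,x_2]]_{M/k}^{(Mp,Mq)}=E[[x_1,x_2]]_{1/k}^{(p,q)}$, both sides of the claim consist of series lying in the same Gevrey class, so only the analytic (sectorial) part requires work.

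First I would record the geometric dictionary between sectors. Because $\arg(x_1^{Mp}x_2^{Mq})=M\arg(x_1^px_2^q)$, and $|x_1|^{Mp},|x_2|^{Mq}<R$ is the same as $|x_1|^p,|x_2|^q<R^{1/M}$, the set $\Pi_{Mp,Mq}(A,B,R)$ coincides with $\Pi_{p,q}(A/M,B/M,R^{1/M})$. Under this identification a sector bisected by $Md$ of opening $B-A$ becomes one bisected by $d$ of opening $(B-A)/M$, and the opening thresholds match: $B-A>\pi/(k/M)=M\pi/k$ is equivalent to $(B-A)/M>\pi/k$. Thus the sectors admissible for $(k/M)-$summability in direction $Md$ for $x_1^{Mp}x_2^{Mq}$ are precisely those admissible for $k-$summability in direction $d$ for $x_1^px_2^q$.

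Next I would match the asymptotic expansions on such a common sector. Writing $\hat{T}_{p,q}(\hat{f})=\sum f_n t^n$ and $\hat{T}_{Mp,Mq}(\hat{f})=\sum g_m t^m$, grouping the $(p,q)-$decomposition by residues modulo $M$ gives $g_m=\sum_{r=0}^{M-1}f_{Mm+r}(x_1^px_2^q)^r$, hence the partial-sum identity $\sum_{m<N}g_m(x_1^{Mp}x_2^{Mq})^m=\sum_{n<MN}f_n(x_1^px_2^q)^n$. Consequently the order-$N$ remainder in $x_1^{Mp}x_2^{Mq}$ equals the order-$MN$ remainder in $x_1^px_2^q$, while $|x_1^{Mp}x_2^{Mq}|^N=|x_1^px_2^q|^{MN}$. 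The Gevrey orders then convert through the elementary estimate $(N!)^M\leq (MN)!\leq M^{MN}(N!)^M$, which turns $N!^{M/k}$ into $(MN)!^{1/k}$ up to the geometric factor $M^{MN/k}$ absorbed into $A^N$. Therefore $f\sim_{M/k}^{(Mp,Mq)}\hat{f}$ delivers the $(p,q)-$estimate at all truncation orders divisible by $M$, and conversely. I would then invoke the monomial analogue of Remark \ref{f in x iff f in x^p} (equivalently, transport through the $T_{p,q}-$characterization) to upgrade estimates at multiples of $M$ to all orders, obtaining $f\sim_{1/k}^{(p,q)}\hat{f}$ if and only if $f\sim_{M/k}^{(Mp,Mq)}\hat{f}$ for the very same holomorphic $f$.

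Combining the two steps yields the directional equality, since the sum $f$ is literally the same function viewed on the same set. The all-directions identity follows because $d\mapsto Md$ is a finite-to-one covering of the circle, so the singular directions in $x_1^{Mp}x_2^{Mq}$ are exactly $M$ times those in $x_1^px_2^q$ modulo $2\pi$, and finiteness of one set is equivalent to finiteness of the other. I expect the main obstacle to be the bookkeeping in the asymptotic matching: verifying the residue identity for $g_m$, checking that the Gevrey-constant conversion is uniform in $N$, and justifying the reduction to truncation orders divisible by $M$ in the monomial setting rather than the one-variable one.
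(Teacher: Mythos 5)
Your proposal is correct and follows essentially the same route as the paper: the same identification $\Pi_{p,q}(a,b,r)=\Pi_{Mp,Mq}(Ma,Mb,r^M)$, the same regrouping $g_m=\sum_{j=0}^{M-1}f_{Mm+j}(x_1^px_2^q)^j$ of the $\hat{T}$-coefficients, the factorial comparison between $N!^{M}$ and $(MN)!$ (the paper phrases it via the limit $(Ml)!^{1/M}/M^l l!\to 0$), and the reduction to truncation orders divisible by $M$ via Remark \ref{f in x iff f in x^p} applied after transporting through $T_{p,q}$ with inequality (\ref{inq Tpq}). The only part you leave schematic --- upgrading estimates at orders $MN$ to all orders in the converse direction, which in the paper requires the Gevrey bounds $\|f_n\|_\rho\leq DB^nn!^{1/k}$ and the estimate at both $N$ and $N+1$ --- is exactly the bookkeeping you flag, so nothing essential is missing.
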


\begin{proof}Let $\hat{f}$ be a formal power series. If $\hat{T}_{p,q}(\hat{f})(t)=\sum_{n=0}^\infty f_nt^n$ then \begin{equation}\label{T_MP,Mq}\hat{T}_{Mp,Mq}(\hat{f})(\tau)=\sum_{n=0}^\infty g_n\tau^n, \hspace{0.2cm} g_n(x_1,x_2)=\sum_{j=0}^{M-1} f_{Mn+j}(x_1,x_2)(x_1^px_2^q)^j.\end{equation}

Suppose that $f\sim^{(p,q)}_{1/k}\hat{f}$ on $\Pi_{p,q}=$$\Pi_{p,q}(a,b,r)=$$\Pi_{Mp,Mq}(Ma, Mb, r^M)=$$\Pi_{Mp,Mq}$, where $d=(a+b)/2$ and $b-a>\pi/k$. Using (\ref{T_MP,Mq}), inequality (\ref{formula def asym x^pe^q}) for $N=ML$, $L\in\N$ and the limit (\ref{lim type stirling}) for $l=M$ we obtain that $f\sim^{(Mp,Mq)}_{M/k} \hat{f}$ on $\Pi_{Mp,Mq}$. Conversely, assume that $f\sim^{(Mp,Mq)}_{M/k} \hat{f}$ on $\Pi_{p,q}$. By definition we see that for every subsector $\Pi_{p,q}'=\Pi_{p,q}(a',b',\rho)$ of $\Pi_{p,q}$ and $N\in \N$ there are positive constants $C=C(\Pi_{p,q}')$, $A=A(\Pi_{p,q}')$ such that

$$\left|f(x_1,x_2)-\sum_{n=0}^{MN-1} f_n(x_1,x_2)(x_1^px_2^q)^n \right|\leq CA^{MN} N!^{M/k}|x_1^px_2^q|^{MN},\hspace{0.2cm}\text{ on }\Pi'_{p,q}.$$

Applying inequality (\ref{inq Tpq}) with $K(u)=u^{MN}$ we see that

$$\left\|T_{p,q}(f)_{\rho'}(t)-\sum_{n=0}^{MN-1} f_nt^n \right\|_{\rho'}\leq \frac{\rho^2CA^{MN} N!^{M/k}}{\left(1-(\frac{\rho'}{\rho})^{\frac{1}{p}}\right)\left(1-(\frac{\rho'}{\rho})^{\frac{1}{q}}\right)}|t|^{MN-1},$$

\noindent on $V(a',b',\rho^2)$, for all $0<\rho'<\rho$. Since $\hat{f}\in E[[x_1,x_2]]^{(p,q)}_{1/k}$, we may find constants $D\geq C$ and $B\geq\max\{1,A\}$ such that $\|f_n\|_\rho\leq DB^nn!^{1/k}$ and then using the previous inequality for $N$ and $N+1$ we obtain

\begin{align*}
\left\|T_{p,q}(f)_{\rho'}(t)-\sum_{n=0}^{MN-1} f_nt^n \right\|_{\rho'}&\leq\\
\frac{\rho^2DB^{NM}(MN+M)!^{1/k}}{\left(1-(\frac{\rho'}{\rho})^{\frac{1}{p}}\right)\left(1-(\frac{\rho'}{\rho})^{\frac{1}{q}}\right)}&\left(\sum_{j=0}^{M-1} B^{j}\rho^j+A^{M}\rho^{M-1}\right)|t|^{MN},\text{ on }V(a',b',\rho^2).
\end{align*}

An application Remark \ref{f in x iff f in x^p} shows that $T_{p,q}(f)_{\rho}\sim_{1/k} \hat{T}_{p,q}(\hat{f})$ on $V(a,b,\rho^2)$, for all $0<\rho<r'$ and so $f\sim_{1/k}^{(p,q)} \hat{f}$ on $\Pi_{p,q}$, as we wanted to prove.
\end{proof}

The last element we need to generalize Proposition \ref{p,q,1/k intersection p,q,1/k'} is the behavior of monomial asymptotic expansions under point blow-ups, i.e. with the composition with the maps $\pi_1$, $\pi_2$. Both the formal and the analytic behavior are described in the next statements.

\begin{lema}\label{f(xe,e) s-Gevrey in x^pe^(p+q)}Let $\hat{f}\in E[[x_1,x_2]]$ be a formal power series. Then the following assertions are true:
\begin{enumerate}
\item $\hat{f}\in E\{x_1,x_2\}$ if and only if $\hat{f}\circ\pi_1\in E\{x_1,x_2\}$ if and only if $\hat{f}\circ\pi_2\in E\{x_1,x_2\}$.
\item $\hat{f}\in E[[x_1,x_2]]^{(p,q)}_s$ if and only if  $\hat{f}\circ\pi_1\in E[[x_1,x_2]]^{(p,p+q)}_s$ and $\hat{f}\circ\pi_2\in E[[x_1,x_2]]^{(p+q,q)}_s$.
\end{enumerate}
\end{lema}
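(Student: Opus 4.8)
The plan is to reduce everything to a computation involving the coefficients of $\hat{f}$ and the characterization of Gevrey type in a monomial via the coefficient growth condition $\|a_{n,m}\|\leq CA^{n+m}\min\{n!^{s/p},m!^{s/q}\}$ that was established earlier in the excerpt. Since both statements concern only formal properties (convergence and $s$-Gevrey type in a monomial), no analytic sector arguments are needed; the whole proof lives at the level of coefficient estimates. The key observation is that composition with $\pi_1(x_1,x_2)=(x_1x_2,x_2)$ transforms a monomial $x_1^mx_2^j$ into $x_1^mx_2^{m+j}$, so if $\hat{f}=\sum a_{n,m}x_1^nx_2^m$ then $\hat{f}\circ\pi_1=\sum a_{n,m}x_1^nx_2^{n+m}$, and symmetrically $\hat{f}\circ\pi_2=\sum a_{n,m}x_1^{n+m}x_2^m$. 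This explains the shape of the exponents in the statement: the monomial $x_1^px_2^q$ should transform to $x_1^px_2^{p+q}$ under $\pi_1$ and to $x_1^{p+q}x_2^q$ under $\pi_2$.

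First I would prove (2), since (1) follows by taking $s=0$ (recall from the excerpt that a series is convergent iff it is $0$-Gevrey in any/the monomial). For the forward direction of (2), I would assume $\|a_{n,m}\|\leq CA^{n+m}\min\{n!^{s/p},m!^{s/q}\}$ and show that the coefficients $b_{n,l}$ of $\hat{f}\circ\pi_1$, where $b_{n,l}=a_{n,l-n}$ for $l\geq n$ and $0$ otherwise, satisfy the analogous estimate for the monomial $x_1^px_2^{p+q}$, namely $\|b_{n,l}\|\leq C'(A')^{n+l}\min\{n!^{s/p},l!^{s/(p+q)}\}$. Writing $m=l-n$, the bound $\|a_{n,m}\|\leq CA^{n+m}n!^{s/p}=CA^l n!^{s/p}$ immediately controls the $n!^{s/p}$ part, while the bound $\|a_{n,m}\|\leq CA^l m!^{s/q}=CA^l(l-n)!^{s/q}$ must be compared against $l!^{s/(p+q)}$. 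The crucial elementary inequality is that $(l-n)!^{1/q}$ is bounded (up to an exponential constant in $l$) by $l!^{1/(p+q)}$ when $n\leq l$; this is where I expect the real work to sit.

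The hard part will be precisely this combinatorial comparison: establishing that for $0\le n\le l$ one has $(l-n)!^{s/q}\le D\cdot B^{l}\cdot l!^{s/(p+q)}$, or rather the correct balancing that yields $\min\{n!^{s/p},(l-n)!^{s/q}\}\le D B^l\min\{n!^{s/p},l!^{s/(p+q)}\}$. One natural route is to split into the regimes where $n$ dominates $l$ (say $n\geq \frac{p}{p+q}l$) versus where $l-n$ dominates, and in each regime use Stirling-type estimates together with the binomial-coefficient bound $\binom{l}{n}\leq 2^l$ to absorb factorials of sums into products of factorials; the limit \eqref{lim type stirling} or direct Stirling asymptotics will supply the needed control. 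I would handle $\hat{f}\circ\pi_2$ by the symmetric argument, exchanging the roles of $(p,n)$ and $(q,m)$.

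For the converse direction of (2), I would argue that the estimate on $\hat{f}\circ\pi_1$ recovers the estimate on $\hat{f}$ by reading off $a_{n,m}=b_{n,n+m}$ and using the hypothesized bound on $b_{n,l}$ with $l=n+m$; this gives the $\min\{n!^{s/p},\cdot\}$ control directly from the $n!^{s/p}$ term and, combined with the analogous information from $\hat{f}\circ\pi_2$ (which supplies the $m!^{s/q}$ control), reconstitutes $\|a_{n,m}\|\leq CA^{n+m}\min\{n!^{s/p},m!^{s/q}\}$. Here I anticipate that one needs \emph{both} blow-ups $\pi_1$ and $\pi_2$ to recover the full minimum, which is why the statement of (2) requires both conditions simultaneously, in contrast to (1) where convergence of either composite suffices. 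Finally, (1) follows from (2) at $s=0$, using that $E[[x_1,x_2]]_0^{(p,q)}=E\{x_1,x_2\}$ for every monomial, together with the remark that it is enough for one of the compositions to be convergent since convergence of $\hat{f}\circ\pi_1$ already forces coefficient bounds of the form $\|a_{n,m}\|\leq CA^{n+m}$ with no factorial, which is symmetric in $n$ and $m$.
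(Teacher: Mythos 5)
Your proposal is correct and works at the same level as the paper's proof: everything is reduced to the coefficient characterization $\|a_{n,m}\|\leq CA^{n+m}\min\{n!^{s/p},m!^{s/q}\}$, with $a'_{n,m}=a_{n,m-n}$ for $\hat{f}\circ\pi_1$, the converse recovered by reading off $a_{n,m}=a'_{n,n+m}$ (and indeed requiring both blow-ups to reconstitute the minimum), and (1) obtained from the $s=0$ case. The only genuine difference is how you handle the step you correctly single out as the crux. Note first that the inequality as you initially phrase it --- ``$(l-n)!^{1/q}$ is bounded up to an exponential factor by $l!^{1/(p+q)}$ for $n\le l$'' --- is false (take $n=0$); only the balanced version $\min\{n!^{s/p},(l-n)!^{s/q}\}\le DB^l\, l!^{s/(p+q)}$ holds, which you do state as the ``correct balancing.'' Your regime-splitting at $n\approx \frac{p}{p+q}l$ with Stirling estimates does prove it, but the paper dispatches it in one line with no constants at all: from $\min\{X,Y\}\le X^{\lambda}Y^{1-\lambda}$ one gets
\[
\min\{n!^{s/p},(l-n)!^{s/q}\}\;\le\; n!^{s\lambda/p}\,(l-n)!^{s(1-\lambda)/q}\;=\;\bigl(n!\,(l-n)!\bigr)^{s/(p+q)}\;\le\; l!^{s/(p+q)},
\]
choosing $\lambda=p/(p+q)$ and using $n!\,(l-n)!\le l!$. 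Your Stirling route costs you an extra exponential factor $B^l$ (harmless, since it is absorbed into $A$) and more bookkeeping, whereas the interpolation trick is exact; otherwise the two arguments are interchangeable.
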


\begin{proof}We only prove the nontrivial implication in the second statement for $\pi_1$.  Let $\hat{f}=\sum a_{n,m} x_1^nx_2^n$ and write $\hat{f}\circ\pi_1=\sum a'_{n,m} x_1^nx_2^m$ where $a'_{n,m}=a_{n,m-n}$ if $m\geq n$ and $0$ otherwise. If $\|a_{n,m}\|\leq CA^{n+m} \min\{n!^{s/p}, m!^{s/q}\}$ then $\|a'_{n,m}\|\leq CA^m n!^{s\lambda/p}(m-n)!^{s(1-\lambda)/q},$ for all $m\geq n$ and $\lambda$ with $0\leq \lambda \leq 1$. The result follows taking $\lambda=p/(p+q)$.
\end{proof}

\begin{prop}\label{monomial sum and blowups}If $\hat{f}\in E\{x_1,x_2\}^{(p,q)}_{1/k,d}$ has $k-$sum $f$ in direction $d$ then $\hat{f}\circ\pi_1\in E\{x_1,x_2\}^{(p,p+q)}_{1/k,d}$, $\hat{f}\circ\pi_2\in E\{x_1,x_2\}^{(p+q,q)}_{1/k,d}$ and have $k-$sums $f\circ\pi_1$, $f\circ\pi_2$ in direction $d$, respectively.
\end{prop}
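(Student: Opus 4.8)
The plan is to prove the statement for $\pi_1$ and obtain the case of $\pi_2$ by the symmetric argument (interchanging the roles of $x_1$ and $x_2$, so that $x_1^px_2^q$ is pulled back to $x_1^{p+q}x_2^q$). The formal half is already available: since $\hat{f}\in E\{x_1,x_2\}^{(p,q)}_{1/k,d}\subset E[[x_1,x_2]]^{(p,q)}_{1/k}$, Lemma \ref{f(xe,e) s-Gevrey in x^pe^(p+q)}(2) with $s=1/k$ gives $\hat{f}\circ\pi_1\in E[[x_1,x_2]]^{(p,p+q)}_{1/k}\subset\mathcal{C}$, so the candidate sum lives in the correct Gevrey class in the monomial $x_1^px_2^{p+q}$. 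What remains is to produce, on a monomial sector for $x_1^px_2^{p+q}$ with the same opening and the same bisecting direction $d$, a $1/k$-Gevrey asymptotic expansion of $f\circ\pi_1$ with sum $\hat{f}\circ\pi_1$.

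The key algebraic fact is the monomial identity $(x_1x_2)^px_2^q=x_1^px_2^{p+q}$: the map $\pi_1$ pulls back $x_1^px_2^q$ to $x_1^px_2^{p+q}$, and in particular $\arg\big((x_1x_2)^px_2^q\big)=\arg(x_1^px_2^{p+q})$. I would use this to transfer sectors, verifying that for $\tilde{r}$ small enough one has $\pi_1\big(\Pi_{p,p+q}(a,b,\tilde{r})\big)\subseteq\Pi_{p,q}(a,b,r)$, and more generally that $\pi_1$ sends each proper subsector $\Pi_{p,p+q}(a',b',\rho)$ into a proper subsector $\Pi_{p,q}(a',b',\rho')$ of $\Pi_{p,q}(a,b,r)$. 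The argument condition $a<\arg(x_1^px_2^{p+q})<b$ is preserved verbatim by the identity above, while the radii are controlled by the elementary estimates $|x_1x_2|^p=|x_1|^p|x_2|^p$ and $|x_2|^q=(|x_2|^{p+q})^{q/(p+q)}$, which shrink with $\rho$; in particular $f\circ\pi_1$ is then holomorphic on $\Pi_{p,p+q}(a,b,\tilde{r})$. This geometric transfer of monomial sectors is the one step requiring genuine care, although it reduces to a short computation with fractional powers of the radii.

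With this in hand I would invoke the approximation characterization of Proposition \ref{equiv f iff f_n for x^pe^q}. Let $(f_N)\subset\mathcal{O}_b(D_R^2,E)$ be holomorphic approximants for $f\sim^{(p,q)}_{1/k}\hat{f}$, so that $|f-f_N|\le C(\Pi')A(\Pi')^NN!^{1/k}|x_1^px_2^q|^N$ on each subsector $\Pi'$ and $\|f_N\|_R\le DB^NN!^{1/k}$. Set $g_N:=f_N\circ\pi_1$; these are holomorphic and bounded on $D_{R'}^2$ for $R'\le\min\{R,\sqrt{R}\}$, since $\pi_1(D_{R'}^2)\subseteq D_R^2$, with the same Gevrey bound $\|g_N\|_{R'}\le DB^NN!^{1/k}$. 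Composing the defining inequality with $\pi_1$ and using the monomial identity, the estimate transfers \emph{exactly}:
\begin{equation*}
|(f\circ\pi_1)(x_1,x_2)-g_N(x_1,x_2)|\le C(\Pi')A(\Pi')^NN!^{1/k}\,|x_1^px_2^{p+q}|^N
\end{equation*}
on the relevant subsector of $\Pi_{p,p+q}(a,b,\tilde{r})\cap D_{R'}^2$, because by the previous paragraph the image point lies in $\Pi'\cap D_R^2$.

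Thus condition (2) of Proposition \ref{equiv f iff f_n for x^pe^q} holds for $f\circ\pi_1$ with the family $(g_N)$ and $1/k$-Gevrey bounds, whence $f\circ\pi_1\sim^{(p,p+q)}_{1/k}\hat{g}$, where $\hat{g}$ is the $\mathfrak{m}$-adic limit of the Taylor series of the $g_N$. Since composition with $\pi_1$ is $\mathfrak{m}$-adically continuous, this limit is exactly $\hat{f}\circ\pi_1$. Finally, the sector $\Pi_{p,p+q}(a,b,\tilde{r})$ has the same opening $b-a>\pi/k$ and the same bisecting direction $(a+b)/2=d$ as the original one, so $\hat{f}\circ\pi_1\in E\{x_1,x_2\}^{(p,p+q)}_{1/k,d}$ with $k$-sum $f\circ\pi_1$ in the direction $d$, which is the assertion for $\pi_1$; the assertion for $\pi_2$ follows identically.
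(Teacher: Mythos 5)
Your proof is correct and takes essentially the same route as the paper's: both rest on the approximation characterization of Proposition \ref{equiv f iff f_n for x^pe^q}, composing the approximating family $(f_N)$ with $\pi_j$ and using the identity $(x_1x_2)^px_2^q=x_1^px_2^{p+q}$ to transfer the estimates. The paper states this in two lines, while you supply the sector-transfer and Gevrey-bound details explicitly.
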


\begin{proof}The result follows from Proposition \ref{equiv f iff f_n for x^pe^q}. Indeed, if $(f_N)$ is a family of bounded analytic functions that provide the monomial asymptotic expansion to $f$ then $(f_N\circ\pi_j)$ will provide the asymptotic expansion to $f\circ\pi_j$, $j=1,2$.
\end{proof}

At this point we are ready to enounce and prove the main result so far, comparing summable series in different monomials.

\begin{teor}\label{tauberian general case}Let $x_1^px_2^q$ and $x_1^{p'}x_2^{q'}$ be two monomials and $k, l>0$. The following statements hold:

\begin{enumerate}
\item If $\max\{p/p', q/q'\}< l/k$ then $E\{x_1,x_2\}_{1/k}^{(p,q)}\cap  E[[x_1,x_2]]_{1/l}^{(p',q')}=E\{x_1,x_2\}$.

\item $E\{x_1,x_2\}_{1/k}^{(p,q)}\cap E\{x_1,x_2\}_{1/l}^{(p',q')}=E\{x_1,x_2\}$, except in the case $p/p'=q/q'=l/k$ where $E\{x_1,x_2\}_{1/k}^{(p,q)}=E\{x_1,x_2\}_{1/l}^{(p',q')}$.
\end{enumerate}
\end{teor}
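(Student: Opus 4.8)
The plan is to prove the two parts separately, deriving (1) from the one-variable Tauberian theorem after pushing everything through $\hat{T}_{p,q}$, and then bootstrapping (1) by means of point blow-ups to obtain (2).

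For part (1), let $\hat{f}\in E\{x_1,x_2\}_{1/k}^{(p,q)}\cap E[[x_1,x_2]]_{1/l}^{(p',q')}$ and set $\hat{F}=\hat{T}_{p,q}(\hat{f})$, viewed as a power series in $t$ with coefficients in the Banach space $\mathcal{E}^{(p,q)}_r$. On the one hand, $k$-summability of $\hat{f}$ in $x_1^px_2^q$ means precisely that $\hat{F}$ is $k$-summable in $t$ in the classical sense. On the other hand, Lemma \ref{relating to monomials (formal)} applied to $\hat{f}\in E[[x_1,x_2]]_{1/l}^{(p',q')}$ shows that $\hat{F}$ is $s_0$-Gevrey in $t$ with $s_0=\max\{p/p',q/q'\}/l$. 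The hypothesis $\max\{p/p',q/q'\}<l/k$ is exactly the inequality $1/s_0>k$, so $\hat{F}$ is simultaneously $k$-summable and $(1/s_0)$-Gevrey with $k<1/s_0$. Applying Proposition \ref{two different levels of sum implies convergence} in the Banach space $\mathcal{E}^{(p,q)}_r$ forces $\hat{F}$ to be a convergent series in $\mathcal{E}^{(p,q)}_r\{t\}$, and since $\hat{f}\in E\{x_1,x_2\}$ if and only if $\hat{T}_{p,q}(\hat{f})$ is convergent, we obtain $\hat{f}\in E\{x_1,x_2\}$.

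For part (2), I would first dispose of the exceptional case $p/p'=q/q'=l/k$. Writing $(p,q)=a(\tilde p,\tilde q)$ and $(p',q')=a'(\tilde p,\tilde q)$ with $(\tilde p,\tilde q)$ primitive and $a,a'\in\N_{>0}$, the equality $p/p'=l/k$ reads $ak=a'l$; two applications of Proposition \ref{p,q y Mp,Mq} then give $E\{x_1,x_2\}_{1/k}^{(p,q)}=E\{x_1,x_2\}_{1/(ak)}^{(\tilde p,\tilde q)}=E\{x_1,x_2\}_{1/(a'l)}^{(\tilde p,\tilde q)}=E\{x_1,x_2\}_{1/l}^{(p',q')}$, which is the asserted coincidence. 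In every remaining case the goal is to reduce to part (1). The clarifying observation is to encode a configuration by the pair of points $(kp,kq)$ and $(lp',lq')$ in the positive quadrant: a direct translation shows that part (1), together with its mirror image obtained by exchanging the two monomials and the two levels, applies exactly when one of these points strictly dominates the other coordinatewise, while the exceptional case corresponds to the two points coinciding.

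The heart of the argument is therefore to move an \emph{incomparable} configuration to a strictly comparable one by blowing up. I would use that, by Lemma \ref{f(xe,e) s-Gevrey in x^pe^(p+q)}(1), $\hat{f}$ is convergent if and only if $\hat{f}\circ\pi_1$ (equivalently $\hat{f}\circ\pi_2$) is, and that by Proposition \ref{monomial sum and blowups} both summability hypotheses are transported to the blown-up monomials $(p,p+q),(p',p'+q')$ under $\pi_1$ and $(p+q,q),(p'+q',q')$ under $\pi_2$ (losing only finitely many directions, which is harmless for concluding convergence). In the chosen coordinates these blow-ups act as $(kp,kq)\mapsto(kp,kp+kq)$ and $(kp,kq)\mapsto(kp+kq,kq)$: each fixes one coordinate and replaces the other by the sum. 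Assuming, say, $kp<lp'$ and $kq>lq'$, repeated application of $\pi_2$ leaves the second coordinates $kq>lq'$ untouched while the gap $lp'-kp$ between the first coordinates decreases by the fixed positive amount $kq-lq'$ at each step; after finitely many steps this gap becomes negative, giving strict domination of $(kp,kq)$ over $(lp',lq')$ and hence reducing to (the mirror image of) part (1). The boundary configurations, where exactly one of the two coordinates coincides, are settled by a single blow-up in the same manner. I expect this combinatorial reduction, that is, choosing the correct blow-up and exhibiting the monovariant $lp'-kp$ that guarantees termination together with the bookkeeping of how $(k,l,p,q,p',q')$ transform, to be the main obstacle, whereas part (1) is a comparatively routine transfer of the one-variable theorem.
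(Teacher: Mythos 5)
Your proposal is correct and follows essentially the same route as the paper: part (1) by transferring to the Banach-space-valued series $\hat{T}_{p,q}(\hat{f})$ via Lemma \ref{relating to monomials (formal)} and invoking Ramis's theorem, the case $p/p'=q/q'$ via Proposition \ref{p,q y Mp,Mq}, and the incomparable configurations by iterating a point blow-up until one of the points $(kp,kq)$, $(lp',lq')$ strictly dominates the other. The only (immaterial) difference is that you iterate $\pi_2$ and phrase the termination as a monovariant on the gap $lp'-kp$, whereas the paper applies $\pi_1^N$ once with the explicit bound $N>(qk-q'l)/(p'l-pk)$; these are mirror images of the same computation.
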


\begin{proof}1. If $\hat{f}\in E\{x_1,x_2\}_{1/k}^{(p,q)}\cap E[[x_1,x_2]]_{1/l}^{(p',q')}$ then by Corollary \ref{relating to monomials (formal)}, $\hat{T}_{p,q}(\hat{f})$ is a $\max\{p/p',q/q'\}/l$-Gevrey series but it is also $k-$summable in some $\mathcal{E}_r^{(p,q)}$. Using Theorem \ref{two different levels of sum implies convergence} we conclude that $\hat{T}_{p,q}(\hat{f})$ and therefore $\hat{f}$ are convergent.

2. Take any $\hat{f}\in E\{x_1,x_2\}_{1/k}^{(p,q)}\cap E\{x_1,x_2\}_{1/l}^{(p',q')}$. If $p/p'=q/q'$ the result follows from Propositions \ref{p,q,1/k intersection p,q,1/k'} and \ref{p,q y Mp,Mq}. The cases $\max\{p/p', q/q'\}< l/k$ and $l/k<\min\{p/p', q/q'\}$ follow from (1).

Finally the case $\min\{p/p', q/q'\}\leq l/k\leq \max\{p/p', q/q'\}$ can be reduced to the previous one using point blow-ups. For instance, if $p/p'<l/k\leq q/q'$ take $N\in\N_{>0}$ such that $$\frac{qk-q'l}{p'l-pk}<N.$$ By Proposition \ref{monomial sum and blowups} we see that $\hat{f}\circ\pi_1^N\in E\{x_1,x_2\}_{1/k}^{(p,Np+q)}\cap E\{x_1,x_2\}_{1/l}^{(p',Np'+q')}$ but the new monomials satisfy $\max\{p/p', Np+q/Np'+q'\}<l/k$ and thus $\hat{f}\circ\pi_1^N$ is convergent. By Lemma \ref{f(xe,e) s-Gevrey in x^pe^(p+q)} $\hat{f}$ is convergent as we wanted to prove.
\end{proof}

The previous theorem can be generalized to any number of monomials to provide examples of series non-$k-$summable in any monomial for any positive real number $k$.

\begin{teor}\label{series non monomial sumable}
Let $x_1^{p_j}x_2^{q_j}$ be monomials, let $k_j>0$ and let $\hat{f}_j\in E\{x_1,x_2\}^{(p_j,q_j)}_{1/k_j}$ be divergent series, for $j=1,...,n$. Then  $\hat{f}_0=\hat{f}_1+\cdots+\hat{f}_n$ is $k_0-$summable in $x_1^{p_0}x_2^{q_0}$ if and only if $p_j/p_0=q_j/q_0=k_0/k_j$ for all $j=1,...,n$.
\end{teor}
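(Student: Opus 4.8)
The plan is to treat the two implications separately, and to reduce everything to the known one‑variable theory. The \emph{if} direction is quick. If $p_j/p_0=q_j/q_0=k_0/k_j$, write this common rational value as $a/b$ in lowest terms ($a,b\in\N_{>0}$), so $bp_j=ap_0$, $bq_j=aq_0$ and $bk_0=ak_j$. Applying Proposition \ref{p,q y Mp,Mq} with $M=b$ and then with $M=a$ gives $E\{x_1,x_2\}^{(p_j,q_j)}_{1/k_j}=E\{x_1,x_2\}^{(bp_j,bq_j)}_{b/k_j}=E\{x_1,x_2\}^{(ap_0,aq_0)}_{a/k_0}=E\{x_1,x_2\}^{(p_0,q_0)}_{1/k_0}$. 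Thus all the $\hat f_j$ live in one and the same algebra, which is closed under addition, so $\hat f_0$ is $k_0$-summable in $x_1^{p_0}x_2^{q_0}$. For the converse it is convenient to attach to each summand the invariant $(a_j,b_j)=(p_jk_j,q_jk_j)$ and to set $(a_0,b_0)=(p_0k_0,q_0k_0)$; by the same computation $(p_j,q_j,k_j)$ and $(p_0,q_0,k_0)$ generate the same algebra exactly when $(a_j,b_j)=(a_0,b_0)$, so the goal of the \emph{only if} direction is precisely to show $(a_j,b_j)=(a_0,b_0)$ for every $j$.

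For \emph{only if} I would pass from monomial summability to ordinary summability in the single variable $x_1$, after a blow-up chosen to separate the invariants. First I would record how $\pi_2^M$ acts: by Proposition \ref{monomial sum and blowups} it preserves $k$-summability and sends $x_1^{p}x_2^{q}$ to $x_1^{p+Mq}x_2^{q}$, hence the invariant $(a,b)=(pk,qk)$ to $(a+Mb,b)$; by Lemma \ref{f(xe,e) s-Gevrey in x^pe^(p+q)} it preserves convergence and divergence; and it commutes with the finite sum. Since there are finitely many distinct invariants among $(a_0,b_0),(a_1,b_1),\dots,(a_n,b_n)$, I would pick $M\in\N_{>0}$ so that the numbers $a+Mb$ are pairwise distinct over these invariants, and replace every series by its pull-back under $\pi_2^M$. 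After this reduction the $x_1$-levels $p_jk_j$ attached to \emph{different} invariants are pairwise distinct.

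Next I would use the remark closing Section 2: a series that is $k$-summable in $x_1^{p}x_2^{q}$ is $pk$-summable as a power series in $x_1$ with coefficients in a Banach space $E'$ of maps holomorphic on a small sector in $x_2$. Grouping the terms of the identity $\hat f_0-\sum_j\hat f_j=0$ according to their invariant, each group becomes a series in $E'[[x_1]]$ summable at one level, and the finitely many levels that occur are pairwise distinct. The core is then the one-variable statement: \emph{if finitely many elements of $E'[[x_1]]$, summable at pairwise distinct levels $\lambda_1<\cdots<\lambda_r$, add up to a convergent series, then each of them is convergent.} I would prove this by induction on $r$, peeling off the lowest level: the term $\hat h_1$ equals a convergent series minus the remaining terms, which together are $1/\lambda_2$-Gevrey, so $\hat h_1\in E'\{x_1\}_{1/\lambda_1}\cap E'[[x_1]]_{1/\lambda_2}=E'\{x_1\}$ by Proposition \ref{two different levels of sum implies convergence} (using $\lambda_1<\lambda_2$), after which the remaining sum is still convergent and the induction continues. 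Applied here, every invariant group sums to a convergent series; a group whose invariant differs from $(a_0,b_0)$ reduces to a single $\hat f_j$, which is thereby forced to be convergent, contradicting divergence. Hence no invariant differs from $(a_0,b_0)$, as desired.

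The step I expect to be the main obstacle is the translation between monomial summability and honest $x_1$-summability: one must produce a \emph{single} Banach space $E'$ — one small $x_2$-sector together with a range of $x_1$-directions avoiding the finitely many singular directions of all the series simultaneously — in which every pull-back is summable at its level, so that the grouped identity genuinely lives in $E'\{x_1\}$ and Proposition \ref{two different levels of sum implies convergence} can be invoked. A secondary point requiring care is the grouping: the argument controls each invariant class only as a whole, so to pass from ``each class convergent'' to ``each $\hat f_j$ compatible'' one needs that two divergent summands sharing an invariant cannot silently cancel; I would therefore run the argument under the natural hypothesis that the invariants $(p_jk_j,q_jk_j)$ are pairwise distinct (equivalently, the triples $(p_j,q_j,k_j)$ are pairwise inequivalent), which is exactly the regime in which ``mixing different summation processes'' is meaningful.
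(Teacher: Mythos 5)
Your ``if'' direction is correct and matches the paper (all the $\hat f_j$ lie in one algebra by Proposition \ref{p,q y Mp,Mq}), and your overall strategy for the converse --- blow up to separate the invariants $(p_jk_j,q_jk_j)$ and then invoke a one-variable tauberian theorem --- is close in spirit to the paper's. But there is a genuine gap at the last step of your ``only if'' argument. Your one-variable induction takes place in $E'[[x_1]]$ with $E'=\mathcal{O}_b(S_2\cap D_\rho,E)$ and $S_2$ a \emph{proper sector} in $x_2$; this is unavoidable, since the monomial sum of $\hat f_j$ only restricts to an $x_1$-sum after $x_2$ is confined to a narrow sector. What you obtain is therefore that each $\hat f_j$ lies in $E'\{x_1\}$, i.e.\ its coefficients $c_{j,n}(x_2)$ satisfy $\sup_{S_2\cap D_\rho}\|c_{j,n}\|\leq CA^{n}$. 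This does \emph{not} contradict the divergence of $\hat f_j$ in $E[[x_1,x_2]]$: Cauchy's estimates are unavailable on a sector, and geometric bounds on a proper subsector of the $x_2$-disk --- even combined, via two-constants interpolation, with the bounds $\|c_{j,n}\|_{D_R}\leq CA^{n}n!^{1/(p_jk_j)}$ coming from monomial Gevrey-ness --- do not yield geometric bounds on a full $x_2$-disk. The paper avoids exactly this by never leaving spaces of functions on full polydisks: it works with $\hat T_{p_0,q_0}(\hat f_0)\in\mathcal{E}^{(p_0,q_0)}_r[[t]]$, uses blow-ups to arrange that \emph{both} components of the invariants are strictly ordered (your $\pi_2^M$ separates only the first components), and then Lemma \ref{relating to monomials (formal)} together with Proposition \ref{two different levels of sum implies convergence} forces convergence of $\hat f_0$ itself in $E[[x_1,x_2]]$; an induction on $n$ then peels off $\hat f_1$ to reach the contradiction.

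Two smaller points. After fixing one common pair of directions $(\theta_1,\theta_2)$ you would only get summability of each group in a single $x_1$-direction, whereas Proposition \ref{two different levels of sum implies convergence} as stated concerns the full algebras $E\{x\}_{1/k}$; you would need either the directional version of Ramis's theorem or the observation that for a fixed generic $\theta_2$ all but finitely many $\theta_1$ work. Also, your added hypothesis that the invariants $(p_jk_j,q_jk_j)$ be pairwise distinct weakens the statement; the paper instead merges summands with coincident invariants (they generate the same algebra) inside its induction --- though you are right that some care of this kind is needed, since as literally stated the theorem fails for $\hat f_2=-\hat f_1$.
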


\begin{proof}By induction on $n$. If $n=1$ the statement is just Theorem \ref{tauberian general case}, (2). Suppose the statement is true for $n-1$ and let us prove it for $n$. If $p_j/p_0=q_j/q_0=k_0/k_j$ hold for all $j=1,...,n$ then $\hat{f}_0$ is $k_0-$summable in $x_1^{p_0}x_2^{q_0}$. Conversely, assume that $\hat{f_0}\in E\{x_1,x_2\}^{(p_0,q_0)}_{1/k_0}$ and $k_0p_0\leq...\leq k_np_n$ (reindexing if necessary). The following situations cover all the possible cases:

\begin{enumerate}
\item[I.] Suppose the strict inequalities $k_0p_0<k_1p_1<\cdots<k_np_n$ and $k_0q_0<k_1q_1<\cdots<k_nq_n$ hold. Then $\hat{T}_{p_0,q_0}(\hat{f}_0)$ is  $\max\{p_0/p_1, q_0/q_1\}/k_1-$Gevrey and $k_0-$summable in some $\mathcal{E}_r^{(p_0,q_0)}$ and thus $\hat{f_0}$ is convergent by Theorem \ref{two different levels of sum implies convergence}. Using the induction hypothesis on the series $\hat{g}_0=\hat{f}_0-\hat{f}_1$ and $\hat{g}_j=\hat{f}_{j+1}$, $j=1,...,n-1$, we obtain a contradiction.

\item[II.] If $k_0p_0=\cdots=k_np_n$, we compare the numbers $k_jq_j$. When all $k_jq_j$ are different from each other we can assume $k_0q_0<\cdots<k_nq_n$, the series $\hat{f}_j\circ\pi_2$ satisfy the hypothesis in (I) and arguing as in that case we see that $\hat{f_0}\circ \pi_2$ and thus $\hat{f_0}$ are convergent. Using the induction hypothesis we obtain a contradiction. Otherwise $k_iq_i=k_lq_l$ for some $i\neq l$, $E\{x_1,x_2\}_{1/k_i}^{(p_i,q_i)}=E\{x_1,x_2\}_{1/k_l}^{(p_l,q_l)}$ and we can use the induction hypothesis to conclude that $k_0q_0=k_jq_j$ for all $j=1,...,n$, as we wanted to show.

\item[III.] Suppose $k_0p_0<k_np_n$ and let $i_0<i_1<\cdots<i_m$ be the places where we have a strict inequality, i.e. $k_{i_l}p_{i_l}<k_{i_l+1}p_{i_l+1}$ and if $i_l+1\leq j \leq i_{l+1}$ then $k_jp_j=k_{i_{l+1}}p_{i_{l+1}}$, for $l=0,...,m$. Take $N\in\N_{>0}$ satisfying $$N>\max_{0\leq l\leq m} \frac{k_{i_l}q_{i_l}-k_{i_l+1}q_{i_l+1}}{k_{i_l+1}p_{i_l+1}-k_{i_l}p_{i_l}},$$ and consider the series $\hat{f_j}\circ\pi_1^N\in E\{x_1,x_2\}_{1/k_j}^{(p_j,q_j')}$, $j=0,...,n$, where $q_j'=Np_j+q_j$. By the election of $N$ we have $k_{i_l}q_{i_l}'<k_{i_l+1}q_{i_l+1}'$ for all $l=0,...,m$ and the order relations between $k_{i_l+1}q_{i_l+1}',..., k_{i_{l+1}}q_{i_{l+1}}'$ are the same as the ones between $k_{i_l+1}q_{i_l+1},..., k_{i_{l+1}}q_{i_{l+1}}$. If for some $l$ two numbers among $k_{i_l+1}q_{i_l+1}'$,$...$, $k_{i_{l+1}}q_{i_{l+1}}'$ are equal, the corresponding spaces coincide and we can use the induction hypothesis to get a contradiction. If not, all the numbers $k_{i_l+1}q_{i_l+1}',..., k_{i_{l+1}}q_{i_{l+1}}'$ are different, for all $l=0,1,...,m$ and we can assume that $k_{i_l+1}q_{i_l+1}'<\cdots<k_{i_{l+1}}q_{i_{l+1}}'$, for all $l=0,1,...,m$, where all the inequalities are strict. In other words, we have arrive to the situation $k_0q_0'<\cdots <k_nq_n'$, where all the inequalities are strict. Therefore the series $\hat{f_j}\circ\pi_1^N\circ\pi_2$ satisfy the hypothesis in (I) and arguing as in that case we see that $\hat{f_0}\circ\pi_1^N\circ\pi_2$ and thus $\hat{f_0}$ are convergent. Using the induction hypothesis we obtain a contradiction.
\end{enumerate}

Since the only non-contradictory case occurs when $p_j/p_0=q_j/q_0=k_0/k_j$ for all $j=1,...,n$ the statement is true for $n$ and the result follows from the principle of induction.
\end{proof}

Given $\hat{f}\in\mathcal{C}$, we can also consider the situation when $\hat{f}$ is summable as a series in $x_1$ or $x_2$, with coefficients in $\mathcal{O}_b(D_R,E)$ for some $R>0$ and compare the summability phenomena we have at our disposal. It turns out again that all the methods are incompatible and the proofs can be reduced to Theorem \ref{tauberian general case} using blow-ups. The key statement is the following.

\begin{prop}\label{summ and blowups}If $\hat{f}\in \mathcal{O}_b(D_R,E)\{x_1\}_{1/k,d}$ has $k-$sum $f$ in direction $d$ then $\hat{f}\circ\pi_1\in E\{x_1,x_2\}^{(1,1)}_{1/k,d}$, $\hat{f}\circ\pi_2\in \mathcal{O}_b(D_R,E)\{x_1\}_{1/k,d}$ and have $k-$sums $f\circ\pi_1$, $f\circ\pi_2$ in direction $d$, respectively.
\end{prop}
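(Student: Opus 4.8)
The plan is to follow the proof of Proposition~\ref{monomial sum and blowups}: I will realize the $k$-sum through an approximating family of bounded holomorphic maps and then compose that family with $\pi_1$ and $\pi_2$. Write $\hat f=\sum_n a_n(x_2)x_1^n$ with $a_n\in\mathcal O_b(D_R,E)$; the hypothesis means that, regarding $\mathcal O_b(D_R,E)$ as the ambient Banach space, there is a sector $V=V(a,b,r)$ with bisecting direction $d$ and $b-a>\pi/k$ on which $f\sim_{1/k}\hat f$. First I would apply Remark~\ref{equiv f-f_n} to this one-variable situation: it provides an $x_1$-radius $r'>0$ and a family $(f_N)_N\subset\mathcal O_b(D_{r'}\times D_R,E)$, bounded and jointly holomorphic, such that for every subsector $W$ of $V$
$$\sup_{|x_2|\le R}\|f(x_1,x_2)-f_N(x_1,x_2)\|\le C_N(W)|x_1|^N\quad\text{on }W\cap D_{r'},$$
with Gevrey control $C_N(W)=C(W)A(W)^NN!^{1/k}$ and $\sup\|f_N\|\le DB^NN!^{1/k}$.

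For $\pi_1$ I would set $g_N=f_N\circ\pi_1$, so $g_N(x_1,x_2)=f_N(x_1x_2,x_2)$, which is bounded and holomorphic on a polydisc $D_\sigma^2$ with $\sigma$ small enough that $\sigma^2<r'$ and $\sigma<R$. The key point is that substituting $x_1x_2$ into the order-controlled first slot converts the estimate above into
$$\|f\circ\pi_1(x_1,x_2)-g_N(x_1,x_2)\|\le C_N(W)|x_1x_2|^N$$
on any subsector of the monomial sector $\Pi_{1,1}(a,b,\rho)$, as soon as $(x_1,x_2)$ is close enough to the origin for $x_1x_2$ to lie in the corresponding subsector $W$ of $V$ intersected with $D_{r'}$. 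Since the $g_N$ inherit the Gevrey sup-bounds of the $f_N$, Proposition~\ref{equiv f iff f_n for x^pe^q} applies and gives $f\circ\pi_1\sim^{(1,1)}_{1/k}\hat f\circ\pi_1$ (in particular placing $\hat f\circ\pi_1$ in $\mathcal C$). As $\arg(x_1x_2)$ ranges over the same interval $(a,b)$, the opening is still $>\pi/k$ and the bisector is still $d$, whence $\hat f\circ\pi_1\in E\{x_1,x_2\}^{(1,1)}_{1/k,d}$ with $k$-sum $f\circ\pi_1$.

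For $\pi_2$ I would instead set $g_N=f_N\circ\pi_2$, so $g_N(x_1,x_2)=f_N(x_1,x_1x_2)$. Now the new variable $x_1x_2$ enters the coefficient slot, so I only need $|x_1x_2|<R$; taking $|x_1|<\min\{1,r'\}$ ensures $g_N(x_1,\cdot)\in\mathcal O_b(D_R,E)$ with the same sup-bound and makes $f\circ\pi_2$ an $\mathcal O_b(D_R,E)$-valued holomorphic map on a sector in $x_1$ of bisector $d$ and opening $>\pi/k$. Because the original estimate is a supremum over all $x_2\in D_R$, it is uniform in the second slot, so replacing $x_2$ by $x_1x_2\in D_R$ leaves the order in $x_1$ untouched:
$$\sup_{|x_2|\le R}\|f\circ\pi_2(x_1,x_2)-g_N(x_1,x_2)\|\le C_N(W)|x_1|^N.$$
Applying Remark~\ref{equiv f-f_n} again, this time with ambient Banach space $\mathcal O_b(D_R,E)$, yields $f\circ\pi_2\sim_{1/k}\hat f\circ\pi_2$ on $V$, hence $\hat f\circ\pi_2\in\mathcal O_b(D_R,E)\{x_1\}_{1/k,d}$ with $k$-sum $f\circ\pi_2$. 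In both cases the asymptotic series is the $\mathfrak m$-adic limit of the Taylor expansions of the $g_N=f_N\circ\pi_j$; since that limit commutes with composition by $\pi_j$ and the $f_N$ have $\hat f$ as limit of their Taylor expansions, it coincides with $\hat f\circ\pi_j$.

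The only genuinely delicate part is this bookkeeping of domains and of the two substitutions. The conceptual heart --- and the reason the two blow-ups produce different kinds of summability --- is that $\pi_1$ pushes $x_1x_2$ into the slot controlled to order $N$, turning ordinary $x_1$-control into monomial $|x_1x_2|^N$-control, whereas $\pi_2$ pushes it into the coefficient slot and thereby preserves summability in $x_1$. I expect the main care to go into keeping $x_1x_2$ inside $D_R$ in the $\pi_2$ case, so that every object stays $\mathcal O_b(D_R,E)$-valued on a full sector.
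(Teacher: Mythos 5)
Your proposal is correct and follows essentially the same route as the paper: the paper composes the partial sums $\sum_{n<N}f_{n*}(x_2)x_1^n$ directly with $\pi_1$ and $\pi_2$, observing that the $|x_1|^N$ bound becomes $|x_1x_2|^N$ under $\pi_1$ (giving monomial asymptotics via the definition) while it is unchanged under $\pi_2$ (giving one-variable asymptotics via Remark \ref{equiv f-f_n}). Your version merely packages the same computation through the abstract approximating-family characterizations of Remark \ref{equiv f-f_n} and Proposition \ref{equiv f iff f_n for x^pe^q}, with the same domain bookkeeping.
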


\begin{proof}Let $\hat{f}=\sum f_{n*}(x_2)x_1^n$, where $f_{n*}\in \mathcal{O}_b(D_R,E)$. If $f\sim_{1/k}\hat{f}$ on $V(a,b,r)$, this means that for every subsector $V'$ of $V$ and every $N\in\N$ we can find positive $C,A$ such that

$$\left\|f(x_1,x_2)-\sum_{n=0}^{N-1} f_{n*}(x_2)x_1^n\right\|\leq CA^NN!^{1/k}|x_1|^N, \hspace{0.3cm}\text{ on }V'\times D_R.$$

If we consider $\Pi_{1,1}=\Pi_{1,1}(a,b,r')$, $r'=\min\{r,R\}$ and a subsector $\Pi_{1,1}'$ then $$\left\|f\circ\pi_1(x_1,x_2)-\sum_{n=0}^{N-1} f_{n*}(x_2)x_1^nx_2^n\right\|\leq CA^NN!^{1/k}|x_1x_2|^N, \hspace{0.3cm}\text{ on } \Pi_{1,1}',$$

\noindent and thus $f\circ\pi_1\sim^{(1,1)}_{1/k} \hat{f}\circ\pi_1$ on $\Pi_{1,1}$. In particular, if $b-a>\pi/k$ and $d=(b+a)/2$ then $f$ is $k-$summable in $x_1x_2$ in direction $d$. For $\pi_2$, if we consider the sector $V(a,b,r'')$, $r''=r/R$, and a subsector $V'$ then $$\left\|f\circ\pi_2(x_1,x_2)-\sum_{n=0}^{N-1} f_{n*}(x_1x_2)x_1^n\right\|\leq CA^NN!^{1/k}|x_1|^N, \hspace{0.3cm}\text{ on } V'\times D_R.$$ The condition $f\circ\pi_2\sim_{1/k} \hat{f}\circ\pi_2$ on $V(a,b,r'')$ follows from Remark \ref{equiv f-f_n} and the proposition as been proved.
\end{proof}

\begin{teor}\label{tauberian for one variable and monomials}Let $x_1^px_2^q$ be a monomial and let $k, l>0$. The following statements hold:

\begin{enumerate}
\item $\mathcal{O}_b(D_r,E)\{x_1\}_{1/k}\cap \mathcal{O}_b(D_r,E)\{x_2\}_{1/l}=E\{x_1,x_2\}$.

\item $\mathcal{O}_b(D_r,E)\{x_j\}_{1/k}\cap E\{x_1,x_2\}_{1/l}^{(p,q)}=E\{x_1,x_2\}$, for $j=1,2$.
\end{enumerate}
\end{teor}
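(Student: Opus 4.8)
The plan is to prove both statements by reducing them, via point blow-ups, to the already-established Tauberian Theorem~\ref{tauberian general case}. The strategy mirrors how Theorem~\ref{series non monomial sumable} handled the mixed-inequality cases: the maps $\pi_1$, $\pi_2$ convert summability in a single variable into summability in a genuine monomial, at which point the incompatibility results for distinct monomials apply. The engine making this work is Proposition~\ref{summ and blowups}, which guarantees that $k$-summability in $x_1$ (with holomorphic coefficients in $x_2$) is preserved or upgraded to monomial $(1,1)$-summability under the blow-ups.

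\emph{Statement (1).} Suppose $\hat{f}\in\mathcal{O}_b(D_r,E)\{x_1\}_{1/k}\cap\mathcal{O}_b(D_r,E)\{x_2\}_{1/l}$. First I would fix a common direction $d$ that is nonsingular for both summability processes. Applying Proposition~\ref{summ and blowups} to the $x_1$-summability via $\pi_1$ yields $\hat{f}\circ\pi_1\in E\{x_1,x_2\}^{(1,1)}_{1/k,d}$. By the symmetric version of Proposition~\ref{summ and blowups} (interchanging the roles of $x_1$ and $x_2$, which applies to the $x_2$-summability via the appropriate blow-up), I would obtain that $\hat{f}\circ\pi_1$ is also $l$-summable in a monomial of type $(1,1)$ in direction $d$. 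Thus $\hat{f}\circ\pi_1\in E\{x_1,x_2\}^{(1,1)}_{1/k}\cap E\{x_1,x_2\}^{(1,1)}_{1/l}$. If $k\neq l$, Proposition~\ref{p,q,1/k intersection p,q,1/k'} forces $\hat{f}\circ\pi_1$ to be convergent, and then Lemma~\ref{f(xe,e) s-Gevrey in x^pe^(p+q)}~(1) gives that $\hat{f}$ itself is convergent. If $k=l$, the two monomials coincide ($p/p'=q/q'=l/k=1$), but since the original summability directions were single-variable (equivalently monomial of the degenerate types realized through the blow-up), I would verify that the ratios cannot match a genuine nontrivial monomial comparison, again forcing convergence by the same route.

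\emph{Statement (2).} Take $\hat{f}\in\mathcal{O}_b(D_r,E)\{x_j\}_{1/k}\cap E\{x_1,x_2\}^{(p,q)}_{1/l}$; by symmetry assume $j=1$. Fixing a nonsingular direction $d$ common to both and applying Proposition~\ref{summ and blowups} through $\pi_1$, I get $\hat{f}\circ\pi_1\in E\{x_1,x_2\}^{(1,1)}_{1/k,d}$. Meanwhile Proposition~\ref{monomial sum and blowups} applied to the $(p,q)$-summability gives $\hat{f}\circ\pi_1\in E\{x_1,x_2\}^{(p,p+q)}_{1/l,d}$. Now $\hat{f}\circ\pi_1$ lies in the intersection of summable series for the two distinct monomials $x_1x_2$ and $x_1^px_2^{p+q}$, so Theorem~\ref{tauberian general case}~(2) applies. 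Since the monomials $(1,1)$ and $(p,p+q)$ satisfy $1/1=1$ while $p/p\neq q/(p+q)$ (because $q\neq 0 \neq p$ forces $p+q\neq q$), the degenerate equality case $p/p'=q/q'=l/k$ cannot hold, so Theorem~\ref{tauberian general case}~(2) yields convergence of $\hat{f}\circ\pi_1$, and Lemma~\ref{f(xe,e) s-Gevrey in x^pe^(p+q)}~(1) transfers this to $\hat{f}$.

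The main obstacle I anticipate is \textbf{bookkeeping the one-variable summability correctly through the blow-ups}, in particular confirming that the symmetric analogue of Proposition~\ref{summ and blowups} (for $x_2$-summability) produces a monomial of the right type and that the two resulting monomials are genuinely distinct so the nontrivial branch of Theorem~\ref{tauberian general case}~(2) forces convergence rather than the degenerate equality of algebras. A secondary subtlety is ensuring a \emph{common} nonsingular direction exists for both methods simultaneously; since each has only finitely many singular directions mod $2\pi$, their union is still finite, so a common summability direction can always be chosen, and the sum obtained in a single nonsingular direction, combined with the formal identity, suffices to conclude convergence via the Gevrey-to-convergent dichotomy already packaged in the cited propositions.
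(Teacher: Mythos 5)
Your argument for statement (2) is essentially correct and is in fact the route the paper intends (the paper only says ``the proof for (2) is analogous''): one blow-up $\pi_1$ turns the $x_1$-summability into $(1,1)$-monomial summability (Proposition \ref{summ and blowups}) and the $(p,q)$-monomial summability into $(p,p+q)$-monomial summability (Proposition \ref{monomial sum and blowups}); since $1/p=1/(p+q)$ would force $q=0$, the degenerate branch of Theorem \ref{tauberian general case}(2) cannot occur and convergence follows. (The worry about a common nonsingular direction is unnecessary: Theorem \ref{tauberian general case} is stated for the full summability algebras, and each blow-up proposition, being valid direction by direction with only finitely many exceptions, transfers membership in those algebras.)

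Statement (1), however, has a genuine gap. You claim that the symmetric version of Proposition \ref{summ and blowups}, applied to the $x_2$-summability of $\hat f$, shows that $\hat f\circ\pi_1$ is $l$-summable in a monomial of type $(1,1)$. It does not. Under the swap $x_1\leftrightarrow x_2$ the roles of $\pi_1$ and $\pi_2$ are also swapped: for $\hat f\in\mathcal{O}_b(D_r,E)\{x_2\}_{1/l}$ it is $\pi_2$ that produces $(1,1)$-monomial summability, while $\pi_1$ merely \emph{preserves} the one-variable $x_2$-summability, i.e.\ $\hat f\circ\pi_1\in\mathcal{O}_b(D_r,E)\{x_2\}_{1/l}$. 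So after a single blow-up you hold one monomial summability and one one-variable summability, and Theorem \ref{tauberian general case} is not applicable. Your fallback for $k=l$ also collapses: even if both summabilities were of type $(1,1)$ with $k=l$, you would land exactly in the degenerate case $p/p'=q/q'=l/k=1$ of Theorem \ref{tauberian general case}(2), which gives no information; the phrase ``I would verify that the ratios cannot match'' is not backed by any argument. The paper's fix is to compose two blow-ups: by Propositions \ref{summ and blowups} and \ref{monomial sum and blowups}, $\hat f\circ\pi_1\circ\pi_2\in E\{x_1,x_2\}^{(2,1)}_{1/k}\cap E\{x_1,x_2\}^{(1,1)}_{1/l}$, because the second blow-up converts the surviving $x_2$-summability into $(1,1)$-monomial summability and upgrades the $(1,1)$ obtained in the first step to $(2,1)$. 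Since $2/1\neq 1/1$, the degenerate case is excluded for every $k,l$, Theorem \ref{tauberian general case}(2) gives convergence of $\hat f\circ\pi_1\circ\pi_2$, and Lemma \ref{f(xe,e) s-Gevrey in x^pe^(p+q)}(1), applied twice, transfers convergence back to $\hat f$.
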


\begin{proof}For (1) take $\hat{f}\in\mathcal{O}_b(D_r,E)\{x_1\}_{1/k}\cap \mathcal{O}_b(D_r,E)\{x_2\}_{1/l}$. Then by Propositions \ref{monomial sum and blowups}, \ref{summ and blowups} and \ref{tauberian general case}, (2). we conclude that $\hat{f}\circ\pi_1\circ\pi_2\in E\{x_1,x_2\}_{1/k}^{(2,1)}\cap E\{x_1,x_2\}_{1/l}^{(1,1)}=E\{x_1,x_2\}$ and clearly $\hat{f}$ is convergent. The proof for (2) is analogous.
\end{proof}

\begin{nota}The fact that a series in $E[[x_1,x_2]]$, $k-$summable in $x_1$ and $l-$summable in $x_2$ is convergent can be also deduced directly from the definitions, arguing as in \cite{Sibuya1}. As a matter of fact, Y. Sibuya considered only series, solution of certain Pfaffian systems that we will recall in the next section.
\end{nota}

\section{Applications to Pfaffian systems}

The natural setting to apply the tauberian properties we have obtained is in the realm of \textit{Pfaffian systems with normal crossings}, i.e. systems of the form

\begin{subnumcases}\empty\label{EPNL x}
x_2^q\hspace{0.1cm}x_1^{p+1}\hspace{0.075cm}\frac{\d \mathbf{y}}{\d x_1}=f_1(x_1,x_2, \mathbf{y}),\\
\label{EPNL e}x_1^{p'}x_2^{q'+1}\frac{\d \mathbf{y}}{\d x_2}=f_2(x_1,x_2, \mathbf{y}),
\end{subnumcases}

\noindent where $p, q, p', q'\in\N_{>0}$, $\mathbf{y}\in \C^l$, and $f_1, f_2$ are holomorphic functions defined on a neighborhood of the origin in $\C\times\C\times\C^l$. If $f_1(x_1,x_2,\mathbf{0})=f_2(x_1,x_2,\mathbf{0})=\mathbf{0}$ and the functions $f_1, f_2$ satisfy the following \textit{integrability condition} on their domains of definition:

$$-qx_1^{p'}x_2^{q'}f_1(x_1,x_2,\mathbf{y})+x_1^{p'}x_2^{q'+1}\frac{\d f_1}{\d x_2}(x_1,x_2,\mathbf{y})+\frac{\d f_1}{\d \mathbf{y}}(x_1,x_2,\mathbf{y})f_2(x_1,x_2,\mathbf{y})=$$
\begin{equation}\label{int condition nonlinear}
-p'x_1^{p}x_2^{q}f_2(x_1,x_2,\mathbf{y})+x_1^{p+1}x_2^{q}\frac{\d f_2}{\d x_1}(x_1,x_2,\mathbf{y})+\frac{\d f_2}{\d \mathbf{y}}(x_1,x_2,\mathbf{y})f_1(x_1,x_2,\mathbf{y}),
\end{equation}

\noindent then the system will be referred as \textit{completely integrable Pfaffian system with normal crossings}. The normal crossing refers to the singular locus $x_1x_2=0$ where the differential equations change to implicit ones. If we write $A(x_1,x_2)=\frac{\d f_1}{\d \mathbf{y}}(x_1,x_2,\mathbf{0})$ and $B(x_1,x_2)=\frac{\d f_2}{\d \mathbf{y}}(x_1,x_2,\mathbf{0})$ then they satisfy

\begin{equation}\label{EP integrability AB}
x_1^{p'}x_2^{q'}\left(x_2\frac{\d A}{\d x_2}-qA\right)-x_1^{p}x_2^{q}\left(x_1\frac{\d B}{\d x_1}-p'B\right)+[A,B]=\mathbf{0}, \hspace{0.2cm}\text{ on } D_r^2\text{ for some }r>0.
\end{equation}

Here $[\cdot,\cdot]$ denotes the usual Lie bracket of matrices. By taking $x_1=0$ and $x_2=0$ we conclude that  $[A(0,0), B(0,0)]=\mathbf{0}$, $[A(x_1,0), B(x_1,0)]=\mathbf{0}$ and $[A(0,x_2),B(0,x_2)]=\mathbf{0}$ on $D_r^2$.

\begin{nota}We remark that the integrability condition is preserved by point blow-ups. In fact, it is straightforward to check that the pull-back of system (\ref{EPNL x}), (\ref{EPNL e}) under the map $\pi_1^N(t_1,t_2)=(t_1t_2^N,t_2)$, $N\in\N$, (resp. the map $\pi_2^M(t_1,t_2)=(t_1,t_1^Mt_2)$, $M\in\N$) is given by

\begin{subnumcases}\empty\label{EPNL x pi1N}
t_2^{Np+q}\hspace{0.1cm}t_1^{p+1}\hspace{0.075cm}\frac{\d \mathbf{y}}{\d t_1}=f_1(t_1t_2^N,t_2, \mathbf{y}),\\
\label{EPNL e pi1N}t_1^{p'}t_2^{Np'+q'+1}\frac{\d \mathbf{y}}{\d t_2}=f_2(t_1t_2^N,t_2, \mathbf{y})+Nt_1^{p'-p}t_2^{N(p'-p)+q'-q}f_1(t_1t_2^N,t_2, \mathbf{y}).
\end{subnumcases}

\noindent resp.

\begin{subnumcases}\empty\label{EPNL x pi2M}
t_2^{q}\hspace{0.1cm}t_1^{p+Mq+1}\hspace{0.075cm}\frac{\d \mathbf{y}}{\d t_1}=f_1(t_1,t_1^Mt_2, \mathbf{y})+Mt_1^{p-p'+M(q-q')}t_2^{q-q'}f_2(t_1t_2^N,t_2, \mathbf{y}),\\
\label{EPNL e pi2M}t_1^{p'+Mq}t_2^{q'+1}\frac{\d \mathbf{y}}{\d t_2}=f_2(t_1,t_1^Mt_2, \mathbf{y}).
\end{subnumcases}
\end{nota}

To motivate the results we are going to present we start by commenting a theorem due to R. G\'{e}rard and Y. Sibuya on solutions of completely integrable Pfaffian systems with irregular singular points.

\begin{teor}[G\'{e}rard-Sibuya]\label{GerardSibuya theorem} Consider the completely integrable Pffafian system (\ref{EPNL x}), (\ref{EPNL e}), with $q=p'=0$. If $\frac{\d f_1}{\d \mathbf{y}}(0,0,\mathbf{0})$ and $\frac{\d f_2}{\d \mathbf{y}}(0,0,\mathbf{0})$ are invertible then the Pfaffian system admits a unique analytic solution $\mathbf{y}$ at the origin such that $\mathbf{y}(0,0)=0$.
\end{teor}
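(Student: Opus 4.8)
The plan is to reduce the statement to the Tauberian result of Theorem \ref{tauberian for one variable and monomials}, (1), by exhibiting a formal solution that is simultaneously summable in $x_1$ and in $x_2$. When $q=p'=0$ the system (\ref{EPNL x}), (\ref{EPNL e}) reads
\begin{equation*}
x_1^{p+1}\frac{\d \mathbf{y}}{\d x_1}=f_1(x_1,x_2,\mathbf{y}),\qquad x_2^{q'+1}\frac{\d \mathbf{y}}{\d x_2}=f_2(x_1,x_2,\mathbf{y}),
\end{equation*}
so that, with the other variable frozen, each equation is a system of ordinary differential equations with an irregular singularity of Poincar\'{e} rank $p$, respectively $q'$.

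First I would build the unique formal solution from the first equation. Writing $\hat{\mathbf{y}}=\sum_{n\geq 0}\mathbf{y}_n(x_2)x_1^n$ and substituting, the coefficient of $x_1^0$ gives $f_1(0,x_2,\mathbf{y}_0(x_2))=\mathbf{0}$; since $f_1(0,x_2,\mathbf{0})=\mathbf{0}$ and $A(0,0)=\frac{\d f_1}{\d\mathbf{y}}(0,0,\mathbf{0})$ is invertible, the implicit function theorem forces $\mathbf{y}_0\equiv\mathbf{0}$ near the origin on a disc $D_R$. For $k\geq 1$, comparing coefficients of $x_1^k$ expresses $A(0,x_2)\mathbf{y}_k$ through $\mathbf{y}_0,\dots,\mathbf{y}_{k-1}$ only (the left-hand side contributing the term $(k-p)\mathbf{y}_{k-p}$ when $k>p$), and invertibility of $A(0,x_2)$ on $D_R$ determines each $\mathbf{y}_n\in\mathcal{O}_b(D_R,\C^l)$ recursively and uniquely.

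Next I would verify that this same $\hat{\mathbf{y}}$ solves the second equation; this is where complete integrability enters. Setting $\mathbf{R}:=x_2^{q'+1}\frac{\d\hat{\mathbf{y}}}{\d x_2}-f_2(x_1,x_2,\hat{\mathbf{y}})$, one applies $x_1^{p+1}\d/\d x_1$, commutes the derivatives, replaces $x_1^{p+1}\d\hat{\mathbf{y}}/\d x_1$ by $f_1$, and inserts the integrability relation (\ref{int condition nonlinear}) with $q=p'=0$. After cancellation this yields the homogeneous linear equation $x_1^{p+1}\frac{\d\mathbf{R}}{\d x_1}=\frac{\d f_1}{\d\mathbf{y}}(x_1,x_2,\hat{\mathbf{y}})\,\mathbf{R}$, whose coefficient matrix equals $A(0,x_2)$ at $x_1=0$; the same recursive invertibility argument as above, now homogeneous, forces $\mathbf{R}=\mathbf{0}$. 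Hence $\hat{\mathbf{y}}$ is the unique formal solution of the whole system.

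Finally, regarding $\hat{\mathbf{y}}$ as the formal solution of the irregular singular system in $x_1$ with holomorphic parameter $x_2\in D_R$, the summability theory for such equations yields $\hat{\mathbf{y}}\in\mathcal{O}_b(D_R,\C^l)\{x_1\}_{1/p}$, and symmetrically from the second equation $\hat{\mathbf{y}}\in\mathcal{O}_b(D_R,\C^l)\{x_2\}_{1/q'}$. Theorem \ref{tauberian for one variable and monomials}, (1), then gives $\hat{\mathbf{y}}\in\C^l\{x_1,x_2\}$, so $\hat{\mathbf{y}}$ converges to the desired analytic solution, whose uniqueness is inherited from that of the formal solution. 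The main obstacle is precisely this last analytic input: establishing $p$- and $q'$-summability \emph{uniformly} in the frozen variable, i.e.\ with sums and Gevrey constants holomorphic and bounded in the parameter, which requires the summability theorem for formal solutions of irregular singular ordinary differential equations depending holomorphically on a parameter (via parameter-dependent Borel--Laplace or majorant estimates). Everything else is formal algebra or a direct appeal to the Tauberian machinery already developed.
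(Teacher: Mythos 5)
Your proposal follows essentially the same route as the paper's own sketch: construct the unique formal solution of the first equation, use the integrability condition (via the residual satisfying a homogeneous linear equation with invertible leading part) to show it also solves the second, invoke $p$-summability in $x_1$ and $q'$-summability in $x_2$ from the general theory of irregular singular systems with a holomorphic parameter, and conclude convergence from Theorem \ref{tauberian for one variable and monomials}. Your version merely fills in the recursive construction and the $\mathbf{R}=\mathbf{0}$ argument that the paper leaves implicit, and correctly flags the parameter-uniform summability as the one nontrivial analytic input.
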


\begin{proof}[Sketch of proof] The first proof of Theorem \ref{GerardSibuya theorem} can be found in  \cite{Gerard Sibuya}. Due to the nature of the result Y. Sibuya reproved it with different methods, see \cite{Sibuya1} for a proof using summability theory and see \cite{Sibuya3}, \cite{Sibuya2} for a proof in the linear case using algebraic tools. For a more recent proof the reader may also consult \cite{Sanz}. Let us sketch a proof here for completeness, Let $\hat{\mathbf{y}}$ be the unique formal power solution of equation (\ref{EPNL x}). Using the integrability condition it follows that $\hat{\mathbf{y}}$ is also solution of equation (\ref{EPNL e}). By the general theory of summability, $\hat{\mathbf{y}}$ is $p-$summable in the variable $x_1$ and $q'-$summable in the variable $x_2$. Then by Theorem \ref{tauberian for one variable and monomials} $\hat{\mathbf{y}}$ is convergent.
\end{proof}

In our context, the main result obtained by the authors in \cite{Monomial summ} is the following statement on summability of formal solutions of doubly singular equations.

\begin{teor}\label{nonlinear sol is 1 sumable in xe}Consider the singularly perturbed differential equation $$\e^qx^{p+1}\frac{d\mathbf{y}}{dx}=f(x,\e,\mathbf{y}),$$ where $\mathbf{y}\in\C^l$, $p,q\in\N_{>0}$, $f$ analytic in a neighborhood of $(0,0,\mathbf{0})$ and $f(0,0,\mathbf{0})=\mathbf{0}$. If $\frac{\d f}{\d \mathbf{y}}(0,0,\mathbf{0})$ is invertible then the previous equation has a unique formal solution $\hat{\mathbf{y}}$. Furthermore it is $1-$summable in $x^p\e^q$.
\end{teor}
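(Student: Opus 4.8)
The plan is to follow the classical three-step scheme for singularly perturbed equations, but carried out entirely in the monomial variable $t=x^p\e^q$: first construct the formal solution, then establish $1$-Gevrey bounds in $x^p\e^q$, and finally sum by a Borel--Laplace procedure after applying $\hat{T}_{p,q}$.

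First I would produce the unique formal solution. Writing $f(x,\e,\yy)=f(x,\e,\mathbf{0})+A(x,\e)\yy+O(\|\yy\|^2)$ with $A=\d f/\d\yy$, one plugs $\hat{\yy}=\sum a_{n,m}x^n\e^m$ into the equation. Since $\e^qx^{p+1}\d_x$ sends $x^n\e^m$ to $n\,x^{n+p}\e^{m+q}$, it raises the bidegree by $(p,q)$; comparing the lowest-order terms forces $f(0,0,\hat{\yy}(0,0))=\mathbf{0}$, whence $\hat{\yy}(0,0)=\mathbf{0}$ from $f(0,0,\mathbf{0})=\mathbf{0}$ and the invertibility of $A(0,0)=\d f/\d\yy(0,0,\mathbf{0})$. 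At each higher bidegree $(N,M)$ the recursion reads $A(0,0)\,a_{N,M}=(N-p)\,a_{N-p,M-q}-(\text{combination of already-determined coefficients})$, and invertibility of $A(0,0)$ solves for $a_{N,M}$ uniquely, giving existence and uniqueness of $\hat{\yy}$.

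The second and decisive step is to show $\hat{\yy}\in E[[x,\e]]^{(p,q)}_1$, i.e. that $\hat{T}_{p,q}(\hat{\yy})$ is $1$-Gevrey in $t$. The underlying mechanism is that on functions of the single monomial $t=x^p\e^q$ the operator acts as an Euler operator, $\e^qx^{p+1}\d_x\,g(x^p\e^q)=p\,t^2g'(t)$, precisely the operator whose formal solutions are $1$-Gevrey and $1$-summable. Applying $\hat{T}_{p,q}$ turns the equation into a singular $t$-equation whose principal part is $p\,t^2\,d/dt$, and I would run a majorant argument on the recursion above to obtain $\|a_{n,m}\|\le C A^{n+m}\min\{n!^{1/p},m!^{1/q}\}$, which is exactly the coefficient characterization of $E[[x,\e]]^{(p,q)}_1$ recalled in Section~2. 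The main obstacle lies here: the linear factor $N-p$ produced by the Euler operator must be shown, after reorganizing through $\hat{T}_{p,q}$, to contribute only the expected factorial growth $n!$, and the two-variable mixed-derivative bookkeeping together with the nonlinear terms of $f$ must all be absorbed into a single majorant controlled by the inverse of $A(0,0)$.

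Finally, to upgrade Gevrey to summability I would use that $\hat{\yy}$ is $1$-summable in $x^p\e^q$ if and only if $\hat{T}_{p,q}(\hat{\yy})_\rho$ is $1$-summable in $t$ for small $\rho$. Taking the formal $1$-Borel transform in $t$ regularizes the Euler part at the origin and converts the equation into a convolution equation with analytic coefficients in the Borel plane, with values in the Banach spaces $\mathcal{E}^{(p,q)}_\rho$. A fixed-point argument---using the invertibility of $A(0,0)$ to bound the inverse of the linearized operator and to control the nonlinear convolution---shows the Borel transform continues analytically along every direction outside a finite set with at most exponential growth; the Laplace transform then yields a genuine solution $f$ with $f\sim^{(p,q)}_1\hat{\yy}$ on sectors of opening exceeding $\pi$, which is $1$-summability in $x^p\e^q$. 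As an alternative I could invoke the equivalence recalled at the end of Section~2 and instead prove $p$-summability in $x$ uniformly for $\e$ in a sector via the classical theory of rank-$p$ irregular singular ODEs; the delicate point in that route is the uniformity as $\e\to0$ forced by the factor $\e^{-q}$, which is exactly what singles out $x^p\e^q$ as the correct summation variable.
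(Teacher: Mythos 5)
You should first be aware that the paper contains no proof of this statement: Theorem~\ref{nonlinear sol is 1 sumable in xe} is quoted as the main result of \cite{Monomial summ}, where the argument proceeds by constructing actual analytic solutions on monomial sectors of opening greater than $\pi$ via an integral equation in the original variables and establishing the $1$-Gevrey asymptotic expansion directly, rather than by a Borel--Laplace analysis in the variable $t$. So there is no in-paper argument to compare against; what follows is an assessment of your sketch on its own terms. Your first two steps are the right skeleton: the recursion $A(0,0)a_{N,M}=(N-p)a_{N-p,M-q}-(\cdots)$ does give existence and uniqueness of $\hat{\mathbf{y}}$, and the target bound $\|a_{n,m}\|\le CA^{n+m}\min\{n!^{1/p},m!^{1/q}\}$ is exactly the right formulation of $1$-Gevreyness in $x^p\e^q$ --- though the majorant argument, which is where the real work lies, is announced rather than carried out.

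The genuine gap is in the third step. It is not true that $\hat{T}_{p,q}$ converts the equation into a $t$-equation whose principal part is $p\,t^2\,d/dt$ acting on $\mathcal{E}^{(p,q)}_\rho$-valued series. Writing $\hat{\mathbf{y}}=\sum y_n(x,\e)t^n$ with $y_n\in\mathcal{E}^{(p,q)}$, one computes
\begin{equation*}
\e^qx^{p+1}\frac{\d}{\d x}\bigl(y_n\,(x^p\e^q)^n\bigr)=(x^p\e^q)^{n+1}\Bigl(np\,y_n+x\frac{\d y_n}{\d x}\Bigr),
\end{equation*}
so the transformed operator is $p\,t^2\frac{d}{dt}+t\,(x\d_x)$, where $x\d_x$ acts coefficientwise and is an \emph{unbounded} operator on each Banach space $\mathcal{E}^{(p,q)}_\rho$ (it is bounded only from $\mathcal{E}^{(p,q)}_\rho$ to $\mathcal{E}^{(p,q)}_{\rho'}$ with $\rho'<\rho$, with norm of order $(\rho-\rho')^{-1}$). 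Consequently the Borel-plane equation is not a convolution equation with bounded analytic coefficients in a fixed $\mathcal{E}^{(p,q)}_\rho$, and the fixed-point argument you invoke does not close in a single Banach space; one needs either a scale of spaces with Nagumo-type norms (exploiting the smallness of $\int_0^\xi$ against the $(\rho-\rho')^{-1}$ loss) or a genuinely two-variable monomial Borel transform, neither of which you set up. A second, smaller issue is that $\mathcal{E}^{(p,q)}$ is not stable under multiplication, so re-expanding the nonlinear terms $f(x,\e,\sum y_nt^n)$ in the canonical form $\sum g_nt^n$ with $g_n\in\mathcal{E}^{(p,q)}$ redistributes contributions between degrees in $t$; you acknowledge this bookkeeping for the Gevrey step but silently assume it away in the Borel plane. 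Your alternative route (summability in $x$ with coefficients holomorphic on a \emph{sector} in $\e$, uniformly as $\e\to0$) is closer in spirit to the cited proof, but as written it names the difficulty without resolving it.
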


We shall expect that this result joint with Theorem \ref{tauberian general case} lead to a direct generalization of Theorem \ref{GerardSibuya theorem}. Unfortunately the hypothesis of complete integrability imposes serious restrictions on $f_1$ and $f_2$, in particular in their linear parts, and the conditions imposed to generalize the theorem are not satisfied. Indeed, a closer look to equation (\ref{int condition nonlinear}) leads us to the following result.

\begin{teor}\label{spectrum CI systems}Consider the Pfaffian system (\ref{EPNL x}), (\ref{EPNL e}). If it is completely integrable then the following assertions hold:
\begin{enumerate}
\item If $p'<p$ or $q'<q$ then $A(0,0)$ 
is nilpotent.

\item If $p<p'$ or $q<q'$ then $B(0,0)$ 
is nilpotent.

\item If $p=p'$ and $q=q'$, for every eigenvalue $\mu$ of $B(0,0)$ there is an eigenvalue $\lambda$ of $A(0,0)$ such that $q\lambda=p\mu$. The number $\lambda$ is an eigenvalue of $A(0,0)$, when restricted to its invariant subspace $\{v\in\C^l | (B(0,0))-\mu I)^kv=0\text{ for some }k\in\N\}$.
\end{enumerate}
\end{teor}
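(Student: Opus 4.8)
The plan is to extract from the integrability identity (\ref{EP integrability AB}) an explicit matrix relation at the origin and along the coordinate axes, and then read off spectral information by comparing the orders of vanishing of the monomial coefficients. The whole argument rests on studying (\ref{EP integrability AB}) as an identity of matrix-valued holomorphic functions near $(0,0)$, so first I would expand $A$ and $B$ in their Taylor series and track the lowest-order terms in each of the three regimes dictated by the relative sizes of $(p,q)$ and $(p',q')$.

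For the first two statements I would exploit the different monomial weights $x_1^{p'}x_2^{q'}$ and $x_1^px_2^q$ that multiply the $A$- and $B$-derivative terms in (\ref{EP integrability AB}). If $p'<p$ or $q'<q$, then the term $x_1^px_2^q(x_1\d B/\d x_1 - p'B)$ vanishes to strictly higher order in at least one variable than $x_1^{p'}x_2^{q'}(x_2\d A/\d x_2 - qA)$; restricting to the appropriate axis ($x_2=0$ if $q'<q$, or $x_1=0$ if $p'<p$) kills the higher-order block and forces a relation of the form $x_2\d A/\d x_2 - qA + (\text{commutator term})=\mathbf{0}$ along that axis. The key observation is that along $x_2=0$ the commutator term $[A,B]$ pairs with $[A(x_1,0),B(x_1,0)]=\mathbf{0}$, already noted right after (\ref{EP integrability AB}); evaluating carefully at the origin I expect to obtain $qA(0,0)=\mathbf{0}$ up to a commutator, and iterating the Euler-operator relation $x_2\d A/\d x_2 - qA$ on homogeneous components should pin down that $A(0,0)$ has only the zero eigenvalue, i.e. it is nilpotent. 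The symmetric computation with the roles of $A$ and $B$, $p$ and $p'$, $q$ and $q'$ interchanged yields statement (2).

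For the third statement, with $p=p'$ and $q=q'$, the two monomial weights coincide, so (\ref{EP integrability AB}) collapses at the origin to a single relation between $A(0,0)$, $B(0,0)$ and their first-order data together with $[A(0,0),B(0,0)]=\mathbf{0}$. Since $A(0,0)$ and $B(0,0)$ commute, they can be simultaneously put in block-triangular form respecting the generalized eigenspaces of $B(0,0)$, and I would restrict attention to the invariant subspace $\{v \mid (B(0,0)-\mu I)^k v=\mathbf{0}\}$ attached to a fixed eigenvalue $\mu$. On this block I expect the vanishing-order bookkeeping in (\ref{EP integrability AB}) to force a scalar relation $q\lambda = p\mu$ linking the eigenvalue $\lambda$ of the restricted $A(0,0)$ to $\mu$, which is exactly the claimed identity.

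The main obstacle I anticipate is the commutator term $[A,B]$: unlike the Euler-derivative terms it does not carry an obvious monomial factor, so isolating lowest orders requires care, and one must use the three axiswise commutator vanishing facts $[A(0,0),B(0,0)]=\mathbf{0}$, $[A(x_1,0),B(x_1,0)]=\mathbf{0}$, $[A(0,x_2),B(0,x_2)]=\mathbf{0}$ to control it rather than discarding it. Making the nilpotency argument rigorous will likely need an induction on the Taylor coefficients of $A$ (or $B$) along the relevant axis, showing that the Euler operator $x_j\d/\d x_j - (\text{weight})$ combined with the commutator constraint admits only trivial spectrum; this inductive step, rather than the initial setup, is where the real work lies.
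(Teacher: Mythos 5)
Your starting point --- reading the integrability identity (\ref{EP integrability AB}) order by order in the Taylor expansions of $A$ and $B$ --- is the same as the paper's, but the mechanism you propose for extracting nilpotency does not work, and the essential ingredients of the actual proof are missing. Restricting to an axis, say $x_2=0$, simply kills \emph{both} Euler terms (since $q,q'\geq 1$) and returns the already-known relation $[A(x_1,0),B(x_1,0)]=\mathbf{0}$; to see $x_2\frac{\d A}{\d x_2}-qA$ you must extract the coefficient of $x_2^{q'}$ (and of $x_1^{p'}$), and at that order the commutator contributes a sum $\sum_{i+j=q'}[A_{*i},B_{*j}]$ of mixed Taylor coefficients which does not vanish. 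The only device that eliminates such commutators is the trace, and taking the trace of the full matrix identity yields only $\operatorname{tr}A(0,0)=0$ (and/or $\operatorname{tr}B(0,0)=0$), which is far weaker than nilpotency. The paper's key step, absent from your plan, is to block-diagonalize $B_{0*}(x_2)$ \emph{holomorphically in $x_2$} (Wasow, Thm.\ 25.1) so that one block carries a single eigenvalue $\mu_0$ of $B(0,0)$, to use $[A_{0*}(x_2),B_{0*}(x_2)]=\mathbf{0}$ to show $A_{0*}$ respects the same blocks, and then to take the trace of the $(1,1)$ block of the order-one coefficient equation: this isolates $l_1\mu_0$ and forces $\mu_0=0$ when $q<q'$, or $q\operatorname{tr}\overline{A}_0^{11}(0)=l_1\mu_0$ when $q=q'$, with a further splitting of $\overline{A}_0^{11}$ giving statement (3). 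Your ``iterating the Euler-operator relation'' has no analogous way to see individual eigenvalues rather than the global trace.

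There are two further gaps. First, statements (1)--(2) include incomparable cases such as $p'<p$ with $q<q'$, where neither monomial $x_1^{p'}x_2^{q'}$ nor $x_1^{p}x_2^{q}$ divides the other and no single axis restriction or lowest-order comparison settles the matter; the paper reduces these to the comparable cases by precomposing with the blow-ups $\pi_1^N$, $\pi_2^M$ (and, when only one exponent differs, by multiplying one equation by a power of a variable), using that complete integrability is preserved under these operations. Second, in statement (3) the factor $p$ in $q\lambda=p\mu$ is produced by rank reduction --- the ramification $z_1=x_1^p$ and the associated $pl\times pl$ system satisfying (\ref{int comp t}) --- and your bookkeeping at the origin for $p=p'>1$ cannot generate it, since the relevant coefficient equations in $x_1$ then mix $A(0,0)$ and $B(0,0)$ with higher Taylor coefficients rather than producing a clean scalar relation. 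Without the block-splitting and trace argument, the blow-up reductions, and the rank reduction, the proposal does not yield the theorem.
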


\begin{proof}We consider first the particular case $p=p'$ and then we reduce the rest of them to this one using for instance point blow-ups. Let us write \begin{equation*}
A(x_1,x_2)=\sum_{n,m\geq 0} A_{n,m}x_1^mx_2^m=\sum_{n\geq0} A_{n*}(x_2)x_1^n=\sum_{m\geq0} A_{*m}(x_1)x_2^m,
\end{equation*} and analogous notations for $B$. Using equation (\ref{EP integrability AB}) we see that

\begin{align}
\label{3}x_2^{q'}\left(x_2 A_{n-p'*}'(x_2)-q A_{n-p'*}(x_2)\right)-(n-p-p')x_2^qB_{n-p*}(x_2)+\sum_{i=0}^n [A_{i*}(x_2),B_{n-i*}(x_2)]=\mathbf{0},
\end{align}

\noindent for all $n\in\N$ and $|x_2|<r$. Let $\mu_0$ be an arbitrary eigenvalue of $B_{0,0}$. We can find $\rho>0$ small enough and $P\in \text{GL}(l,\mathcal{O}(D_\rho))$ such that

$$\overline{B}_{0*}(x_2)=P(x_2)B_{0*}(x_2)P(x_2)^{-1}=\left(\begin{array}{cc}
\overline{B}^{11}_{0}(x_2) & \mathbf{0}\\
\mathbf{0} & \overline{B}^{22}_0(x_2)
\end{array}\right),$$

\noindent so that for every $|x_2|<\rho$, the matrices $\overline{B}^{11}_{0}(x_2)$ and $\overline{B}^{22}_0(x_2)$ have no common eigenvalues and the only eigenvalue of $\overline{B}^{11}_{0}(0)$ is $\mu_0$ (see \cite[Thm. 25.1]{Wasow}). This last property joint with the fact that $B_{0*}$ and $A_{0*}$ commute let us conclude that

$$\overline{A}_{0*}(x_2)=P(x_2)A_{0*}(x_2)P(x_2)^{-1}=\left(\begin{array}{cc}
\overline{A}^{11}_{0}(x_2) & \mathbf{0}\\
\mathbf{0} & \overline{A}^{22}_0(x_2)
\end{array}\right),$$

\noindent where $[\overline{A}^{jj}_0(x_2), \overline{B}^{jj}_0(x_2)]=\mathbf{0}$, $j=1,2$. Let us also write

\begin{align*}
\overline{A}_{1*}(x_2)&=P(x_2)A_{1*}(x_2)P(x_2)^{-1}=\left(\begin{array}{cc}
\overline{A}^{11}_{1}(x_2) & \overline{A}^{12}_{1}(x_2)\\
\overline{A}^{21}_{1}(x_2) & \overline{A}^{22}_1(x_2)
\end{array}\right),\\
\overline{B}_{1*}(x_2)&=P(x_2)B_{1*}(x_2)P(x_2)^{-1}=\left(\begin{array}{cc}
\overline{B}^{11}_{1}(x_2) & \overline{B}^{12}_{1}(x_2)\\
\overline{B}^{21}_{1}(x_2) & \overline{B}^{22}_1(x_2)
\end{array}\right),
\end{align*}

\noindent in the same block-decomposition as $\overline{A}_{0*}(x_2)$ and $\overline{B}_{0*}(x_2)$.

We have two possibilities: First $p=p'=1$, where equation (\ref{3}) for $n=1$ takes the form

\begin{equation*}
x_2^{q'}\left(x_2 A_{0*}'(x_2)-q A_{0*}(x_2)\right)+x_2^qB_{0*}(x_2)+[A_{0*}(x_2),B_{1*}(x_2)]+[A_{1*}(x_2), B_{0*}(x_2)]=\mathbf{0}.
\end{equation*}

\noindent Multiplying by $P(x_2)$ to the left and by $P(x_2)^{-1}$ to the right, the equation obtained in the position $(1,1)$ according to the previous block decomposition is

\begin{equation}\label{new eq p=1=p' n=1}
x_2^{q'+1}(PA_{0*}'P^{-1})^{(1,1)}(x_2)-qx_2^{q'}\overline{A}^{11}_{0}(x_2)+x_2^q\overline{B}^{11}_{0}(x_2)+[\overline{A}^{11}_{0}(x_2),\overline{B}^{11}_{1}(x_2)]+[\overline{A}^{11}_{1}(x_2),\overline{B}^{11}_{0}(x_2)]=\mathbf{0},
\end{equation}

\noindent where $(PA_{0*}'P^{-1})^{(1,1)}$ indicates the matrix in position $(1,1)$ of $PA_{0*}'P^{-1}$. Taking the trace in this equation we see that

$$x_2^{q'+1}\text{tr}\left((PA_{0*}'P^{-1})^{(1,1)}(x_2)\right)-qx_2^{q'}\text{tr}\left(\overline{A}^{11}_{0}(x_2)\right)+x_2^q\text{tr}\left(\overline{B}^{11}_{0}(x_2)\right)=0.$$

\noindent If $q<q'$ we conclude that $\mu_0=0$ and $B_{0,0}$ is nilpotent. If instead $q=q'$ we conclude that

$$q \text{ tr}\left(\overline{A}_0^{1,1}(0)\right)=\text{tr}\left(\overline{B}^{1,1}_0(0)\right)=l_1\mu_0,$$

\noindent where $l_1$ is the size of $\overline{B}^{1,1}_0$. We have two cases here:
\begin{enumerate}
\item $\overline{A}_0^{1,1}(0)$ has only one eigenvalue $\lambda_0$ and then $q\lambda_0=\mu_0$.

\item $\overline{A}_0^{1,1}(0)$ has at least two different eigenvalues. Let $\lambda_0$ be one of them. We apply again the previous splitting process to the matrix $\overline{A}_0^{1,1}(x_2)$ to conclude that $q\lambda_0=\mu_0$.
\end{enumerate}

In the second case $p=p'>1$, we can apply rank reduction, i.e. use the ramification $z_1=x_1^p$. If we write
\begin{equation*}
A(x_1,x_2)=\sum_{i=0}^{p-1} x_1^iA_i(x_1^p,x_2),\hspace{0.5cm} B(x_1,x_2)=\sum_{i=0}^{p-1} x_1^iB_i(x_1^p,x_2),
\end{equation*}

\begin{align*}
\widetilde{A}(z_1,x_2)&=\left(\begin{array}{cccc}
A_0 & z_1 A_{p-1} & \cdots & z_1 A_1\\
A_1 & A_0-z_1x_2^qI& \cdots & z_1 A_2\\
\vdots & \vdots & \ddots & \vdots\\
A_{p-1} & A_{p-2} & \cdots & A_0-(p-1)z_1x_2^q I
\end{array}
\right),\\
\widetilde{B}(z_1,x_2)&=\left(\begin{array}{cccc}
B_0 & z_1 B_{p-1} & \cdots & z_1 B_1\\
B_1 & B_0& \cdots & z_1 B_2\\
\vdots & \vdots & \ddots & \vdots\\
B_{p-1} & B_{p-2} & \cdots & B_0
\end{array}
\right).
\end{align*}


\noindent then $\widetilde{A}, \widetilde{B}\in \text{Mat}(pl\times pl, \C\{z_1,x_2\})$ are obtained from the system (\ref{EPNL x}), (\ref{EPNL e}) as follows: if we write $\mathbf{y}(x_1,x_2)=\mathbf{y}_0(x_1^p,x_2)+x_1\mathbf{y}_1(x_1^p,x_2)+\cdots+ x_1^{p-1}\mathbf{y}_{p-1}(x_1^p,x_2)$ and put $\mathbf{Y}=(\mathbf{y}_0,...,\mathbf{y}_{p-1})^t$ then $\widetilde{A}$ and $\widetilde{B}$ correspond to the linear parts of the Pfaffian system satisfied by $\mathbf{Y}$. It turns out that they also satisfy the differential equation

\begin{equation}\label{int comp t}
z_1x_2^{q'}\left(x_2\frac{\d \widetilde{A}}{\d x_2}-q\widetilde{A}\right)-pz_1x_2^q\left(z_1\frac{\d \widetilde{B}}{\d z_1}-\widetilde{B}\right)+[\widetilde{A},\widetilde{B}]=\mathbf{0}.
\end{equation}

So we are in a similar situation as the initial equation (\ref{EP integrability AB}) and we can apply the case $p=p'=1$. If $q<q'$ then $B_{0,0}$ is nilpotent and if and $q=q'$ then for each eigenvalue $\mu$ of $B_{0,0}$ there is an eigenvalue $\lambda$ of $A_{0,0}$ such that $q\lambda=p\mu$, and the case (3) of the theorem has been proved.

The previous reasonings can be reproduced by starting with an arbitrary eigenvalue of $\lambda_0$ and also by changing the symmetric role of $x_1$ and $x_2$ in equation (\ref{int condition nonlinear}). Thus we have proved that:

\begin{enumerate}
\item[a.] If $p=p'$ and $q<q'$, or $q=q'$ and $p<p'$ then $B_{0,0}$ is nilpotent,
\item[b.] If $p=p'$ and $q'<q$, or $q=q'$ and $p'<p$ then $A_{0,0}$ is nilpotent.
\end{enumerate}

To finish the proof we treat the remaining cases as follows:

\begin{itemize}
\item Assume $p<p'$ and $q<q'$. We multiply equation (\ref{EPNL x}) by $x_2^{q'-q}$ in both sides to get a completely integrable system where we can apply (a) to conclude that $B_{0,0}$ is nilpotent. The same argument works in the case $p'<p$ and $q'<q$ to conclude that $A_{0,0}$ is nilpotent.
\item Assume $p<p'$ and $q'<q$. We take $N\in\N$ such that $N>\frac{q-q'}{p'-p}$ and consider the completely integrable system given by (\ref{EPNL x pi1N}), (\ref{EPNL e pi1N}). By the previous item we conclude that $B_{0,0}$ is nilpotent. Now we take $M\in\N$ such that $M>\frac{p'-p}{q-q'}$ and consider the completely integrable system given by (\ref{EPNL x pi2M}), (\ref{EPNL e pi2M}). By the previous item we conclude that $A_{0,0}$ is nilpotent. The case $p'<p$ and $q<q'$ follows in the same way. Note that in this case, both matrices $A_{0,0}$ and $B_{0,0}$ are nilpotent.
\end{itemize}

\end{proof}

\begin{nota}\label{notaMajima} As a consequence of the previous result, if $(p,q)\neq (p',q')$, at least one of the matrices $A_{0,0}$, $B_{0,0}$ is nilpotent, and in many cases, both are. In fact, we conjecture that, in this situation, always both matrices of the linear parts are nilpotent. It is worth to remark that H. Majima in \cite{Majima} using his theory of strongly asymptotic expansions has studied systems (\ref{EPNL x}), (\ref{EPNL e}) and their generalization to more independent variables in the completely integrable case. We point out that Theorem \ref{spectrum CI systems} reduces certain parts of these studies in the case of two variables exclusively to trivial cases or highly degenerate ones. This fact apparently had not been noticed by H. Majima or by other authors.
\end{nota}

Using the tools we have developed here we can provide information on the solutions of systems in the case of non-integrability . Indeed, we can prove easily the convergence of solutions under generic conditions, when they exist. More specifically we have the following theorem.

\begin{teor}\label{gerard-sibuya generalized}Consider the Pffafian system (\ref{EPNL x}), (\ref{EPNL e}). The following assertions hold:
\begin{enumerate}
\item Suppose the system has a formal solution $\hat{\mathbf{y}}$. If $\frac{\d f_1}{\d \mathbf{y}}(0,0,\mathbf{0})$ and $\frac{\d f_2}{\d \mathbf{y}}(0,0,\mathbf{0})$ are invertible and $x_1^px_2^q\neq x_1^{p'}x_2^{q'}$ then $\hat{\mathbf{y}}$ is convergent.

\item If the system is completely integrable and $\frac{\d f_1}{\d \mathbf{y}}(0,0,\mathbf{0})$ is invertible then the system has a unique formal solution $\hat{\mathbf{y}}$. Moreover $\hat{\mathbf{y}}$ is 1-summable in $x_1^px_2^q$.

\item If the system is completely integrable and $\frac{\d f_2}{\d \mathbf{y}}(0,0,\mathbf{0})$ is invertible then the system has a unique formal solution $\hat{\mathbf{y}}$. Moreover $\hat{\mathbf{y}}$ is 1-summable in $x_1^{p'}x_2^{q'}$.
\end{enumerate}
\end{teor}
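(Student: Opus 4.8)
The plan is to reduce every part to the single--equation result, Theorem \ref{nonlinear sol is 1 sumable in xe}, together with the Tauberian Theorem \ref{tauberian general case}(2). The key observation is that each equation of the system is, in isolation, a doubly singular differential equation of the type treated in Theorem \ref{nonlinear sol is 1 sumable in xe}: equation (\ref{EPNL x}) is such an equation in the variable $x_1$ with parameter $x_2$, whose monomial is $x_1^px_2^q$, while equation (\ref{EPNL e}) is one in $x_2$ with parameter $x_1$, whose monomial is $x_1^{p'}x_2^{q'}$.

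For (1), I would note that a formal solution $\hat{\mathbf{y}}$ of the whole system is in particular a formal solution of each equation separately. Since $\frac{\partial f_1}{\partial \mathbf{y}}(0,0,\mathbf{0})$ is invertible, Theorem \ref{nonlinear sol is 1 sumable in xe} applied to (\ref{EPNL x}) tells us that $\hat{\mathbf{y}}$ is the unique formal solution of (\ref{EPNL x}) and is $1$-summable in $x_1^px_2^q$; likewise, invertibility of $\frac{\partial f_2}{\partial \mathbf{y}}(0,0,\mathbf{0})$ applied to (\ref{EPNL e}) gives that $\hat{\mathbf{y}}$ is $1$-summable in $x_1^{p'}x_2^{q'}$. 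As $x_1^px_2^q\neq x_1^{p'}x_2^{q'}$ means $(p,q)\neq(p',q')$, we are outside the exceptional case $p/p'=q/q'=1$ of Theorem \ref{tauberian general case}(2) taken with $k=l=1$, so $\hat{\mathbf{y}}\in E\{x_1,x_2\}$.

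For (2), invertibility of $A(0,0)=\frac{\partial f_1}{\partial \mathbf{y}}(0,0,\mathbf{0})$ yields, via Theorem \ref{nonlinear sol is 1 sumable in xe} applied to (\ref{EPNL x}), a unique formal solution $\hat{\mathbf{y}}$ of (\ref{EPNL x}) that is $1$-summable in $x_1^px_2^q$; note $\hat{\mathbf{y}}(0,0)=\mathbf{0}$. The real content is to show that this same $\hat{\mathbf{y}}$ also solves (\ref{EPNL e}); once this is done, uniqueness for the system and the summability claim are immediate. To this end I would set $\mathbf{g}=x_1^{p'}x_2^{q'+1}\frac{\partial \hat{\mathbf{y}}}{\partial x_2}-f_2(x_1,x_2,\hat{\mathbf{y}})$ and apply the operator $L_1=x_2^qx_1^{p+1}\partial_{x_1}$ to it, writing also $L_2=x_1^{p'}x_2^{q'+1}\partial_{x_2}$. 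Using $L_1\hat{\mathbf{y}}=f_1$, the chain rule, the commutator identity $[L_1,L_2]=p'x_1^{p+p'}x_2^{q+q'+1}\partial_{x_2}-qx_1^{p+p'+1}x_2^{q+q'}\partial_{x_1}$, and above all the integrability condition (\ref{int condition nonlinear}), the computation should collapse to the homogeneous linear equation $x_2^qx_1^{p+1}\frac{\partial \mathbf{g}}{\partial x_1}=\left(p'x_1^px_2^qI+\frac{\partial f_1}{\partial \mathbf{y}}(x_1,x_2,\hat{\mathbf{y}})\right)\mathbf{g}$. The matrix coefficient equals $A(0,0)$ at the origin and is thus invertible, so a coefficient-by-coefficient recursion (solvable at the formal level precisely because $A(0,0)$ is invertible) forces $\mathbf{g}=0$. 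Hence $\hat{\mathbf{y}}$ solves (\ref{EPNL e}), is the unique formal solution of the system, and is $1$-summable in $x_1^px_2^q$. Part (3) is entirely symmetric, exchanging the roles of $(x_1,p,q,f_1)$ and $(x_2,q',p',f_2)$ and using the invertibility of $B(0,0)$.

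The main obstacle is the computation in (2) producing the homogeneous equation for $\mathbf{g}$: one must track carefully how $L_1$ acts on $L_2\hat{\mathbf{y}}$ versus on $f_2(x_1,x_2,\hat{\mathbf{y}})$, eliminate the first-order derivatives of $\hat{\mathbf{y}}$ by means of the two equations of the system, and then recognize that precisely the combination of terms occurring in (\ref{int condition nonlinear}) cancels every inhomogeneous contribution, leaving a linear homogeneous equation whose leading coefficient is invertible. Everything else is a direct appeal to the results already established.
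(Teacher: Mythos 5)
Your proposal is correct and follows essentially the same route as the paper: part (1) applies Theorem \ref{nonlinear sol is 1 sumable in xe} to each equation separately and then Theorem \ref{tauberian general case}(2) with $k=l=1$, while parts (2)--(3) introduce the defect $\hat{\mathbf{w}}=x_1^{p'}x_2^{q'+1}\frac{\d \hat{\mathbf{y}}}{\d x_2}-f_2(x_1,x_2,\hat{\mathbf{y}})$ and use the integrability condition to show it satisfies exactly the homogeneous linear equation $x_1^{p+1}x_2^{q}\frac{\d \mathbf{w}}{\d x_1}=\bigl(p'x_1^{p}x_2^{q}I+\frac{\d f_1}{\d \mathbf{y}}(x_1,x_2,\hat{\mathbf{y}})\bigr)\mathbf{w}$ that the paper obtains, concluding $\hat{\mathbf{w}}=\mathbf{0}$ by uniqueness of the formal solution. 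The extra detail you supply (the commutator identity for $L_1,L_2$) is a correct elaboration of the step the paper labels ``straightforward to check.''
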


\begin{proof}To prove (1) note that if we consider equation (\ref{EPNL x}) as a singularly perturbed ordinary differential equation and $\frac{\d f_1}{\d \mathbf{y}}(0,0,\mathbf{0})$ is invertible then by Theorem \ref{nonlinear sol is 1 sumable in xe} it has a unique formal solution $\hat{\mathbf{y}}_1$, $1-$summable in $x_1^px_2^q$. In the same way, if $\frac{\d f_2}{\d \mathbf{y}}(0,0,\mathbf{0})$ is invertible then (\ref{EPNL e}) has a unique formal solution  $\hat{\mathbf{y}}_2$, $1-$summable in $x_1^{p'}x_2^{q'}$. Since we are assuming that $\hat{\mathbf{y}}=\hat{\mathbf{y}}_1=\hat{\mathbf{y}}_2$ we conclude from Theorem \ref{tauberian general case} that $\hat{\mathbf{y}}$ converges.

The proofs of (2) and (3) are analogous so we only prove (2). If $\frac{\d f_1}{\d \mathbf{y}}(0,0,\mathbf{0})$ is invertible then (\ref{EPNL x}) has a unique solution $\hat{\mathbf{y}}$,  1-summable in $x_1^px_2^q$. To see that $\hat{\mathbf{y}}$ is also a solution of (\ref{EPNL e}) we consider $\hat{\mathbf{w}}=x_1^{p'}x_2^{q'+1}\frac{\d \hat{\mathbf{y}}}{\d x_2}-f_2(x_1,x_2, \hat{\mathbf{y}})$. Then using the integrability condition it is straightforward to check that $\hat{\mathbf{w}}$ is a solution of the linear differential equation with formal coefficients:

$$x_1^{p+1}x_2^q\frac{\d \mathbf{w}}{\d x_1}=\left(p'x_1^px_2^qI_l+\frac{\d f_1}{\d \mathbf{y}}(x_1,x_2,\hat{\mathbf{y}})\right)\mathbf{w},$$

Since $\frac{\d f_1}{\d \mathbf{y}}(0,0,\mathbf{0})$ is invertible, the above equation has a unique formal solution, and since $\mathbf{0}$ is a solution then $\hat{\mathbf{w}}=\mathbf{0}$ as we wanted to show.
\end{proof}

\begin{nota} If the conjecture stated in Remark \ref{notaMajima} is true, the hypothesis of items (2) and (3) of Theorem \ref{gerard-sibuya generalized} cannot be satisfied if $(p,q)\neq (p',q')$. \end{nota}

On the other side the following trivial example exhibits a simple situation of a non-completely integrable system where the hypotheses of the previous theorem hold.

\begin{eje}Consider a constant vector $\mathbf{c}\in\C^l$, arbitrary $p,q,p',q'\in\N_{>0}$ and the Pfaffian system

$$\begin{cases}
x_1^{p+1}x_2^q\hspace{0.18cm}\frac{\d \mathbf{y}}{\d x_1}=\mathbf{y}-\mathbf{c},\\
x_2^{q'+1}x_1^{p'}\frac{\d \mathbf{y}}{\d x_2}=\mathbf{y}-\mathbf{c}.
\end{cases}$$

It has a unique formal solution given by $\hat{\mathbf{y}}=\mathbf{c}$ and it is convergent. Also the system is not completely integrable except by the case $p=p'=q=q'$.
\end{eje}

\bibliographystyle{plaindin_esp}

\begin{thebibliography}{999999}



\bibitem{Balser2}{Balser W.}:
\emph{Formal Power Series and Linear Systems of Meromorphic Ordinary Differential Equations.} Universitext. New York: Springer-Verlag, 2000.







\bibitem{Monomial summ}{Canalis-Durand M., Mozo-Fern\'{a}ndez J., Sch\"{a}fke R.}:
\emph{Monomial summability and doubly singular differential equations.} J. Differential Equations, vol. 233, (2007) 485511.

\bibitem{CDRSS}{Canalis-Durand M., Ramis J.P., Sch\"{a}fke R., Sibuya Y.}:
\emph{Gevrey solutions of singularly perturbed differential equations.} J. Reine Angew. Math, vol. 518, (2000) 95129.



\bibitem{Gerard Sibuya}{G\'{e}rard R.; Sibuya Y.}: \emph{\'{E}tude de certains syst\`{e}mes de Pfaff avec singularit\'{e}s.} Lecture Notes in Mathematics, vol. 172. Springer-Verlag, (1979) 131288.



\bibitem{Majima}{Majima H.}:
\emph{Asymptotic Analysis for Integrable Connections with Irregular Singular Points.} Lecture Notes in Mathematics, vol. 1075. Springer-Verlag, 1984.



\bibitem{Mar-R}{Martinet J., Ramis J.P.}:
\emph{Elementary acceleration and summability.} Ann. L'I.H.P., Section A, tome 54, nº4 (1991) 331-401.







\bibitem{Ramis1}{Ramis J.P.}:
\emph{Les series k-sommables et leurs applications.}  Complex Analysis, Microlocal calculus and Relativistic Quantum Theory, Lectures Notes in Physics vol. 126. Springer Verlag (1980) 178-199.





\bibitem{Sanz}{Sanz J.}:
\emph{Summability in a direction of formal power series in several variables.} Asympt. Anal. 29, (2002)  115-141.

\bibitem{Sauzin}{Sauzin D.}:
\emph{Introduction to 1-summability and resurgence theory.} arXiv:1405.0356.

\bibitem{Sibuya3}{Sibuya Y.}:
\emph{A linear Pfaffian system at an irregular singularity.} T\^{o}hoku Math. Journal 32, (1980) 209-215.

\bibitem{Sibuya1}{Sibuya Y.}:
\emph{Convergence of formal solutions of meromorphic differential equations containing parameters.} Funkcial. Ekvac. 37, (1994) 395-400.

\bibitem{Sibuya2}{Sibuya Y.}:
\emph{Convergence of power series solutions of a linear Pfaffian system at an irregular singularity.} Keio Engineering Reports vol. 31, (1978) 79-86.



\bibitem{Wasow}{Wasow W.}:
\emph{Asymptotic Expansions for Ordinary Differential Equations.} Intersciences Publishers, New York, 1965. Reprint Dover Publ., New York, 1987.




\end{thebibliography}

\end{document}